\def\today{\ifcase\month\or
  January\or February\or March\or April\or May\or June\or=
  July\or August\or September\or October\or November\or December\fi
  \space\number\day, \number\year}
 \newtheorem{theorem}{Theorem}
 \newtheorem{lemma}[theorem]{Lemma}
 \newtheorem{corollary}[theorem]{Corollary}
 \theoremstyle{definition}
 \theoremstyle{remark}
 \newtheorem{remark}[theorem]{Remark}
 \newcommand{\R}{\mathbb{R}}
 \newcommand{\N}{\mathbb{N}}
 \newcommand{\Z}{\mathbb{Z}}
\newcommand{\var}{{\rm Var\,}}
\begin{document}

\title[Sharp inequalities for maximal operators on finite graphs]{Sharp inequalities for maximal operators on finite graphs}
\author[Gonz\'{a}lez-Riquelme and Madrid]{Cristian Gonz\'{a}lez-Riquelme and Jos\'e Madrid}
\date{\today}
\subjclass[2010]{26A45, 42B25, 39A12, 46E35, 46E39, 05C12.}
\keywords{Maximal operators; finite graphs; p-bounded variation; sharp constants.}

\address{IMPA - Instituto de Matem\'{a}tica Pura e Aplicada\\
Rio de Janeiro - RJ, Brazil, 22460-320.}
\email{cristian@impa.br}

\address{Department of  Mathematics,  University  of  California,  Los  Angeles (UCLA),  Portola Plaza 520, Los  Angeles,
California, 90095, USA}
\email{jmadrid@math.ucla.edu}

\allowdisplaybreaks
\numberwithin{equation}{section}

\maketitle

\begin{abstract}Let $G=(V,E)$ be a finite graph (here $V$ and $E$ denote the set of vertices and edges of $G$ respectively)  and $M_G$ be the centered Hardy-Littlewood maximal operator defined there. We find the optimal value ${\bf{C}}_{G,p}$ such that the inequality 
$$\var_{p}M_{G}f\le {\bf C}_{G,p}\var_{p}f$$
holds for every $f:V\to \mathbb{R},$ where $\var_p$ stands for the $p$-variation, when: 
(i) $G=K_n$ (complete graph) and $p\in [\frac{\log(4)}{\log(6)},\infty)$ or $G=K_4$ and $p\in (0,\infty)$; (ii) $G=S_n$(star graph) and $1\ge p\ge \frac{1}{2}$; $p\in (0,\frac{1}{2})$ and $n\ge C(p)$ or $G=S_3$ and $p\in (1,\infty).$ 
We also find value of the norm $\|M_{G}\|_{2}$
when: (i) $G=K_n$ and $n\ge 3$; (ii) $G=S_n$ and $n\ge 3.$
\end{abstract}

\section{Introduction}

\subsection{A brief historical overview and background}
The study of maximal operators is a central theme in analysis. Since the beginning of the past century many properties of these operators have been useful in several areas of mathematics. In general, properties related with the behavior of the norm of these operators have been the main interest of study, until the work of Kinnunen \cite{Ki} where he observed that it was possible to prove the boundedness of the map $$f\to Mf$$ from $W^{1,p}(\mathbb{R}^d)\to W^{1,p}(\mathbb{R}^d),$ when $p>1$, (where $M$ stands for the centered Hardy-Littlewood maximal function). He also showed meaningful applications of this property. This work was the first to study maximal operators at a derivative level. Since then many authors followed this path and proved several results concerning these {\it{derivative level questions}} in a broad class of contexts and for several kinds of maximal operators. This topic of harmonic analysis has been named {\it{regularity theory of maximal operators}}. For general information on this area there is an interesting survey \cite{CaSurvey} by Carneiro.
    

An interesting framework of study is the following. Let $G=(V,E)$ be a graph and $f:V\to\R$ a real valued function. We define the Hardy-Littlewood maximal function of $f$ along $G$ at the point $e\in V$ by 
\begin{equation}\label{classical}
    M_{G}f(e):=\max_{r\geq0}\frac{1}{|B(e,r)|}\sum_{m\in B(e,r)}|f(m)|,
\end{equation} 
where $B(e,r)=\{m\in V;d_{G}(e,m)\leq r\}$, where $d_{G}$ is the metric induced by the edges of $G$ (that is, the distance between two vertices is the number of edges in a shortest path connecting them). A more general version of this, is the so called fractional maximal function defined by
\begin{equation*}
    M_{\alpha,G}f(e):=\max_{r\geq0}\frac{1}{|B(e,r)|^{1-\alpha}}\sum_{m\in B(e,r)}|f(m)|
\end{equation*}
for all $\alpha\in(0,1]$. Both operators have uncentered versions defined by 
\begin{equation*}
    \widetilde{M}_{\alpha,G}f(e)=\max_{B(v,r)\ni e}\frac{1}{|B(v,r)|^{1-\alpha}}\sum_{m\in B(v,r)}|f(m)|
\end{equation*}
 for the fractional one, and $\widetilde{M}_{G}=\widetilde{M}_{0,G}$ for the classical one.  In this paper we study the regularity properties of these objects acting on $l^p-$spaces and bounded $p-$variation spaces. We focus on the classical maximal function defined in \eqref{classical}. 

{In recent years there has been a lot of interest in studying discrete analogous to classical results in harmonic analysis. In particular, in problems involving maximal operators acting in different settings, whether in a discrete context (see, for instance \cite{CaHu,Ma,Ma2,Te}) or as an intermediary step towards the solution of a continuous problem (see, for instance \cite{Me}). Often, these results depends strongly on the structure of the domain considered, as we will observe, in our main theorems the geometry of the graphs analyzed plays a fundamental role.}

The most natural context for the discrete version of the {\it derivative level} questions mentioned above is the following, given $p\in (0,\infty)$ we define the $p$-variation of a function $f:V\to \mathbb{R}$ as follows
$$
\var_{p}f:=\left(\frac{1}{2}\sum_{n}\sum_{\substack{m \\ d_{G}(n,m)=1}}|f(n)-f(m)|^p\right)^{1/p}.
$$
The first work to address a result concerning the derivative level (in this case, the variation) of a maximal operator in a discrete setting was \cite{BCHP}; where they, among other things, found sharp constant for the $1$-variation of the uncentered Hardy-Littlewood maximal operator $\widetilde{M}_{\mathbb{Z}}$, where in $\mathbb{Z}$ we take the usual distance (observe that naturally we can see $\Z$ as an infinite linear tree). That is, they proved that for every $f:\mathbb{Z}\to \mathbb{R},$ we have 
$$\var_{1}\widetilde{M}_{\mathbb{Z}}f\le \var_{1}f,$$
and that the constant in front of $\var_{1}f$ ($1$ in this case) is sharp. Also, in this setting, Temur \cite{Te} (inspired by the beautiful ideas of Kurka \cite{Ku}) concluded that
$$\var_{1}M_{\mathbb{Z}}f\le C \var_{1}f,$$ for a constant $C>10^8.$ It is still an important open problem to find the optimal constant $C$ such that this inequality holds. In the following we use the notation $\var_{1}=:\var.$ 

Maximal functions on finite graphs were studied by Soria and Tradacete in \cite{ST}, where the star and complete graph were of special interest. There, sharp $l^p-$bounds for maximal operators on finite graphs were first obtained. Later, some other geometric properties of maximal functions on infinite graphs were studied by those authors in \cite{ST2}. More recently, bounds for the $p-$variation of the maximal functions on finite graphs were established by Liu and Xue in \cite{LX}. Finding optimal bounds for both the $l^p-$norm of the maximal functions and the $p$-variation of the maximal functions acting on finite graphs is a very interesting and challenging problem. In this paper we make progress on this kind of problem.

\subsection{Conjectures and results for the $p$-variation in finite graphs.}
For a given graph $G=(V,E)$ and $0<p<\infty$, we define 
$${\bf C}_{G,p}:=\sup_{f:V\to \mathbb{R};\var_{p}f>0}\frac{\var M_{G}f}{\var_{p}f}.$$
Liu and Xue (\cite{LX}) obtained optimal results for $n=3$ and for the general case $n>3$ they found some bounds and posed some interesting conjectures. More precisely, they proved that if $G$ is the complete graph with $n$ vertices $K_n$ or the star graph with $n$ vertices $S_n$, then
$$
1-\frac{1}{n}\leq {\bf C}_{G,p}\leq 1
$$
for $0<p<\infty$, and for $n=3$ the lower bound becomes an equality. Moreover, Liu and Xue posed the following conjectures \cite[Conjecture 1]{LX}.

{\it{Conjecture A (for the complete graph $K_n$): For every $n\ge 2$ and $p\in (0,\infty)$ we have
$$
{\bf C}_{K_n,p}=1-\frac{1}{n}.
$$
}}
In this paper we give a positive answer to this conjecture for all $p\ge \frac{\log 4}{\log 6}\approx 0.77$. This range is certainly not optimal and is an interesting problem to try to extend it. Also, we prove the conjecture for every $0<p<1$ when $n=4.$ That is the content of our Theorem \ref{theo 1}. 
\begin{theorem}[Complete graph]\label{theo 1}
Let $0<p\leq\infty$ and $K_{n}=(V,E)$ be a complete graph with $n$ vertices $(a_{1},a_{2},\dots,a_{n})$. Then 
\begin{itemize}
    \item [(i)]  If $p>1$, then \begin{equation*} 
{\bf C}_{K_n,p}=1-\frac{1}{n}.
\end{equation*} 

\item [(ii)] If $0<p\le 1$ and $n=4$, \begin{equation*}
{\bf C}_{K_n,p}=1-\frac{1}{n}.
\end{equation*} 
\item [(iii)] If $n\ge 3$ and $1\ge p\ge \frac{\log(4)}{\log(6)}\approx 0.77$, then
\begin{equation*} 
{\bf C}_{K_n,p}=1-\frac{1}{n}.
\end{equation*} 
\end{itemize}
Moreover, in all the cases the function $\delta_{a_2}$ is an extremizer.
\end{theorem}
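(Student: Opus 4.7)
On $K_n$ every ball of radius $\ge 1$ equals $V$, so
\[
M_{K_n}f(v)=\max\bigl(|f(v)|,A\bigr),\qquad A:=\frac{1}{n}\sum_{j=1}^{n}|f(a_j)|.
\]
Since $M_{K_n}|f|=M_{K_n}f$ and $\var_p|f|\le\var_p f$ by the reverse triangle inequality, we may assume $f\ge 0$. Sort the vertices so that $f(a_1)\ge\cdots\ge f(a_k)\ge A>f(a_{k+1})\ge\cdots\ge f(a_n)$, writing $f_i=f(a_i)$ for $i\le k$ and $g_j=f(a_{k+j})$; then $M_{K_n}f\equiv f$ on the ``high'' vertices $a_1,\dots,a_k$ and $M_{K_n}f\equiv A$ on the ``low'' ones. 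The lower bound ${\bf C}_{K_n,p}\ge 1-1/n$ follows by direct computation for $f=\delta_{a_2}$: one gets $\var_p f=(n-1)^{1/p}$ and $\var_p M_{K_n}f=(n-1)^{1/p}(1-1/n)$, so $\delta_{a_2}$ attains the ratio.

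\textbf{Case $p>1$.} The backbone is the identity $f_i-A=\tfrac{1}{n}\sum_{l\ne i}(f_i-f_l)$, which combined with Jensen's inequality applied to the convex function $x\mapsto|x|^p$ yields
\[
(f_i-A)^p\le\frac{(n-1)^{p-1}}{n^p}\sum_{l\ne i}|f_i-f_l|^p.
\]
For $k=1$ this is already sufficient: summing over the $n-1$ edges incident to $a_1$ gives $(n-1)(f_1-A)^p\le(1-1/n)^p\sum_j(f_1-g_j)^p\le(1-1/n)^p(\var_p f)^p$, exactly the required inequality. For $k\ge 2$ the same Jensen bound is summed over high indices and weighted by $n-k$; the intra-high edge contributions $(f_i-f_l)^p$ appear on both sides of the target inequality and are reconciled using the elementary observation that the two-valued extremal ratio $1-k/n$ never exceeds $1-1/n$ for $k\ge 1$.

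\textbf{Case $p\le 1$.} Here $x^p$ is concave so the Jensen step points the wrong way, and one must characterize extremizers directly. The plan is to show by a perturbation argument (varying one value at a time while preserving the high/low structure) that the supremum of $\var_p M_{K_n}f/\var_p f$ is realized on functions with very few distinct values. Two-valued configurations yield the ratio $1-k/n\le 1-1/n$, so the real task is to rule out three-valued extremizers. For $n=4$ (part (ii)), the small size of $K_4$ together with its symmetries reduces the problem to a finite list of multiplicity patterns, each verified by direct calculation for all $p\in(0,1]$. For general $n$ (part (iii)), the critical three-valued configuration consists of one high value coexisting with two distinct low values; eliminating it reduces to a one-parameter scalar inequality whose validity is equivalent to $6^p\ge 4$, i.e.\ $p\ge\log 4/\log 6$, which is the source of the threshold.

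\textbf{Main obstacle.} The essential difficulty lies in case (iii): absent convexity, the class of candidate extremizers is genuinely larger, and the threshold $\log 4/\log 6$ is dictated by the sharpness of the auxiliary scalar inequality controlling the three-valued configuration. Settling Conjecture A in the full range $p\in(0,1)$ for all $n\ge 3$ would require a substantively new idea beyond the reduction-to-few-values strategy employed here.
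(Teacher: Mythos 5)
Your setup and lower bound are fine, but both main cases contain genuine gaps. For $p>1$, your Jensen bound $(f_i-A)^p\le \frac{(n-1)^{p-1}}{n^p}\sum_{l\ne i}|f_i-f_l|^p$ does settle $k=1$, but the passage to $k\ge2$ does not work as described: in $(\var_p M_{K_n}f)^p$ each intra-high edge $|f_i-f_l|^p$ ($i,l\le k$) appears with coefficient $1$, while on the target right-hand side it appears with coefficient $(1-1/n)^p<1$, and summing your Jensen bound over the high vertices only adds a further positive multiple of these same terms to the left. No ``reconciliation via the two-valued ratio $1-k/n$'' can repair a coefficient deficit on a class of edges; you need a mechanism that transfers weight from the high--low edges, and the low--low edges cannot help since they may all vanish. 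The paper supplies such a mechanism: it first bounds each intra-high edge by $(f_{\min(i,l)}-A)^p$ (absorbing it into the mean-deviation terms), and then applies H\"older not against the global mean but against the partial means $m_i=\frac1i\sum_{t\le i}f(a_t)$ of the $i$ smallest values, which gives $(i-1)\,(f(a_i)-m_i)^p\le\bigl(\tfrac{i-1}{i}\bigr)^p\sum_{t<i}|f(a_i)-f(a_t)|^p$ and lets every edge of $K_n$ be charged at most once.

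For $p\le1$ the proposal is a strategy, not a proof. The reduction of the supremum to functions with few distinct values by one-coordinate perturbations is precisely the hard part: the functional is neither convex nor concave in $f$, $M_{K_n}f$ is only piecewise linear in $f$, and you give no lemma showing that an extremizer must be few-valued, no finite list of multiplicity patterns for $K_4$, and no computation eliminating them. The paper proceeds quite differently and fully explicitly: for $n=4$ it proves, case by case in the number of above-mean vertices, a family of edge-group inequalities via Karamata's inequality and elementary estimates valid for all $0<p\le1$; for (iii) it runs an induction on $n$, deleting the smallest above-mean vertex $a_r$, controlling its contribution by a separate inequality, and invoking the result on $K_{n-1}$. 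You correctly located the source of the threshold ($2^{p-1}3^p\ge 2$, i.e.\ $6^p\ge4$), but in the paper it arises from the borderline vertex in that inductive step, not from a three-valued extremal configuration. As written, parts (i) (for $k\ge2$), (ii) and (iii) are not established.
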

We notice that given the different behavior of the function $x\to x^p$ when $p>1$ and $p\le 1$ very contrasting techniques are needed in each case. Also, we observe that proving \eqref{grmaintool} in a larger range implies a proof of Theorem \ref{theo 1} (iii) in the same range. This is the case because the remaining of the proof is independent of the condition $p\ge \frac{\log(4)}{\log(6)}.$
\\
The second conjecture that they posed is the following.

{\it{Conjecture B (for the star graph $S_n$): For any $n\ge 2$ and $p\in (0,1]$ we have  
$$
{\bf C}_{S_n,p}=1-\frac{1}{n}.
$$
}} 
In this case we prove that, in fact, this equality  is not true for $p>1$. In fact, for $n=3$, we find some bounds different to the ones conjectured in that case. However, we give a positive answer to this conjecture when $1/2\leq p\leq1$ for all $n\geq 2$. Moreover, we give a positive answer to the conjecture when $0<p<1/2$ if $n$ is sufficiently large, this is the content of our Theorem \ref{theo 2}.
 \begin{theorem}[Star graph]\label{theo 2}
 Let $S_{n}=(V,E)$ be a start graph with $n$ vertices $(a_{1},a _{2},\dots,a_n)$, with center at $a_1$. Then, the following hold.
\begin{itemize}
\item [(i)] For all $1<p<\infty$ we have that
\begin{align}\label{star graph n=3}
{\bf C}_{S_3,p}=\frac{(1+2^{p/(p-1)})^{(p-1)/p}}{3}<1.
\end{align} 
\item[(ii)] If $p=1$, then  
\begin{equation}\label{star graph p=1}
{\bf C}_{S_n,p}=1-\frac{1}{n}.
\end{equation} 
\item[(iii)] If $n=4$ and $0<p<1,$ or $n\ge 5$ and $\frac{1}{2}\le p\le 1,$ then  \begin{equation}\label{star graph p<1}
{\bf C}_{S_n,p}=1-\frac{1}{n}.
\end{equation} 
 Moreover, \eqref{star graph p<1} holds for every $\frac{1}{2}>p>0$ when $n\ge C(p),$ for some finite constant $C(p)$ depending only on $p.$
\end{itemize}
\end{theorem}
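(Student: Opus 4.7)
Setup. Throughout I would use $M_{S_n}f=M_{S_n}|f|$ together with $\var_p|f|\le\var_p f$ to reduce to $f\ge 0$, and then shift by $\min_v f(v)$ to arrange $\min_v f(v)=0$. Writing $x=f(a_1)$, $y_j=f(a_j)$ (after reordering so that $y_2\ge\cdots\ge y_n$), and $\bar f=n^{-1}\bigl(x+\sum_{j\ge 2}y_j\bigr)$, the identities
\[
M_{S_n}f(a_1)=\max(x,\bar f),\qquad M_{S_n}f(a_j)=\max\!\bigl(y_j,\tfrac{x+y_j}{2},\bar f\bigr),\quad j\ge 2,
\]
drive all the case analysis. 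The lower bounds in (ii) and (iii) come from testing $f=\delta_{a_2}$, which yields $\var_p f=1$ and $\var_p M_{S_n}f=1-1/n$. For the lower bound in (i) I instead test $f_t=(1,t,0)$ on $S_3$: for $t\ge 2$ the formulas above give $M_{S_3}f_t=((1+t)/3,t,(1+t)/3)$, hence the ratio $(2t-1)/\bigl(3((t-1)^p+1)^{1/p}\bigr)$, whose first-order condition $t-1=2^{1/(p-1)}$ produces the optimal value $(1+2^{p/(p-1)})^{(p-1)/p}/3$.

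Upper bound for part (i). The key observation is that $(1+2^{p/(p-1)})^{(p-1)/p}=\|(1,2)\|_{\ell^q}$ with $q=p/(p-1)$, which signals H\"older's inequality as the sharp tool. I would partition $(x,y_2,y_3)$-space into the finitely many regions on which each of $M_{S_3}f(a_1)$, $M_{S_3}f(a_2)$, $M_{S_3}f(a_3)$ is determined by one of its three candidate expressions, making $M_{S_3}f$ linear in $f$ on each region. In the critical region $\{x\le\bar f,\ y_2\ge x,\ y_3\le x\}$ one computes $M_{S_3}f(a_2)-M_{S_3}f(a_1)=\tfrac{1}{3}\bigl(2(y_2-x)+(x-y_3)\bigr)$ and $M_{S_3}f(a_3)-M_{S_3}f(a_1)=0$; then H\"older's inequality $2a+b\le(1+2^q)^{1/q}(a^p+b^p)^{1/p}$ applied to $a=y_2-x$, $b=x-y_3$ gives exactly the claimed bound, with equality when $a/b=2^{1/(p-1)}$ (i.e., at the extremizer $f_t$ above). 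In every other region I would verify, by elementary comparisons or by symmetry in $y_2,y_3$, that the induced linear map has a strictly smaller $\ell^p$-to-$\ell^p$ operator norm on edge differences.

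Upper bound for parts (ii) and (iii). For $p=1$ I split on whether $M_{S_n}f(a_1)=\bar f$ or $=x$. Setting $d_j=y_j-x$ and $S=\{j\ge 2:y_j>\bar f\}$, in the case $\sum_j d_j\ge 0$ one verifies that leaves outside $S$ contribute nothing to $\var M_{S_n}f$ (since $(x+y_j)/2\le\bar f$ whenever $y_j\le\bar f$ and $x\le\bar f$), which after cancellation of the $x$-terms reduces the desired inequality to $(1-|S|)\sum_S d_j\le|S|\sum_{S^c}d_j+(n-1)\sum_{S^c}|d_j|$; this follows termwise from the pointwise bound $|S|d_j+(n-1)|d_j|\ge 0$ for $j\in S^c$ (valid since $|S|\le n-1$) together with $\sum_S d_j\ge 0$. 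The case $x>\bar f$ is symmetric. For $p<1$ the argument replaces the linear book-keeping by concavity of $t\mapsto t^p$. When $1/2\le p\le 1$, I would prove a mass-transfer monotonicity showing that pushing the value at each non-maximal leaf to $0$ or to $x$ weakly increases the ratio, reducing to a one-parameter optimization that $\delta_{a_2}$ wins. For $0<p<1/2$ with $n\ge C(p)$, the subadditivity $\bigl(\sum_j|y_j|\bigr)^p\le\sum_j|y_j|^p$ shows that spreading mass over many leaves inflates $\var_p f$ substantially more than $\var_p M_{S_n}f$, forcing the ratio below $1-1/n$ once $n$ is sufficiently large.

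Main obstacle. The subtle step is the mass-transfer reduction for part (iii) at $p<1$: establishing monotonicity of the ratio under concentration of mass requires controlling $\var_p f$ and $\var_p M_{S_n}f$ simultaneously, with the concavity of $t^p$ pulling them in opposite directions. The case analysis for part (i) is long but mechanical once the critical region is identified; by contrast, the small-$p$ regime in part (iii), where the threshold $C(p)$ deteriorates as $p\to 0$, requires a quantitative version of the same monotonicity whose cost explains why a uniform-in-$p$ result is not accessible here.
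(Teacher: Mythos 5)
Parts (i) and (ii) of your proposal are essentially sound in outline. For (i) you identify the same extremizer and the same H\"older mechanism as the paper (which organizes the verification into three cases according to the position of $f(a_1)$ among the three values), and checking the remaining regions is indeed mechanical. For (ii), your computation in the case $\sum_j d_j\ge 0$ is correct and is a clean version of the paper's coefficient bookkeeping; however, the case $x>\bar f$ is \emph{not} symmetric (the star has no center--leaf symmetry), and your termwise bound fails there: a leaf with $y_j>x>\bar f$ contributes the full $d_j=|d_j|$ to $\var M_{S_n}f$, not $\tfrac{n-1}{n}|d_j|$. That case can still be closed by the same kind of bookkeeping --- use $M_{S_n}f(a_j)\ge\bar f$ for the leaves with $y_j\le x$, and note that $x>\bar f$ forces $|\{j:y_j>x\}|\le n-2$, so the aggregate coefficient of $\sum_{y_j>x}d_j$ is $(1+|\{j:y_j>x\}|)/n\le\tfrac{n-1}{n}$ --- but this must be written out; it does not follow from what you proved.

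The genuine gap is part (iii). Everything there rests on an unproved ``mass-transfer monotonicity'' asserting that pushing each non-maximal leaf value to $0$ or to $x$ weakly increases the ratio; you yourself flag that concavity of $t\mapsto t^p$ moves $\var_pf$ and $\var_pM_{S_n}f$ in opposite directions, and no mechanism is offered to resolve this tension. A strong sign that the reduction has not actually been carried out is that your sketch never identifies where the hypotheses $p\ge\tfrac12$ (for $n\ge5$) and $n\ge C(p)$ (for $p<\tfrac12$) would enter. In the paper these restrictions arise from one specific configuration: $f(a_1)>m$ with leaves $a_j$ satisfying $f(a_j)<f(a_1)$ whose maximal value is the two-point average $\tfrac{f(a_1)+f(a_j)}{2}$; this produces a term $\sum_j\bigl(\tfrac{y_j}{2}\bigr)^p$ and, after Jensen, reduces the estimate to the numerical inequality $(n-3)^{1-p}\bigl(1-(1-\tfrac1n)^p\bigr)\le(1-\tfrac1n)^p-2^{-p}$, which is exactly where $p\ge\tfrac12$ (respectively $n\ge C(p)$) is used. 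The complementary case $f(a_1)\le m$ requires a separate argument as well (a reflection reduction followed by minimizing a concave function of the gap $m-f(a_1)$ over an interval and checking both endpoints). None of this structure is visible in your outline, so part (iii) should be regarded as a strategy statement rather than a proof.
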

The range $(\frac{1}{2},1)$ in (iii) is certainly not optimal, to find improvements on this range is an interesting problem.  
\\
{\it{Conjecture C (boundedness and continuity): Let $0<p,q\le \infty $ and $0\le \alpha<1.$ The operator $M_{\alpha,G}$ is bounded and continuous from $BV_{p}(G)$ to $BV_{q}(G)$, where $BV_{p}(G):=\{f:V\to \mathbb{R}; \var_pf<\infty\}$ is endowed with $\|f\|_{\widetilde {BV_{p}(V)}}:=\var_{p}f$,  note that $\|\cdot\|_{\widetilde{BV_{p}(V)}}$ depends strongly on $G.$ not only on the set of vertices $V$.}}

We prove that the boundedness holds as conjectured. Moreover,
we prove that with a slight modification the continuity affirmation is true. That is the content of our next theorem.
We also prove that a modification is strictly required. This is related with the fact that $\|\cdot\|_{\widetilde{BV_p(G)}}$ is not a norm (think about constant functions for example), on the other hand, taking $a_0\in V$ we have that $\|f\|_{BV_p(G)}:=\|f\|_{\widetilde{BV_p(G)}}+|f(a_0)|$ is a norm.
\begin{theorem}\label{theo 3}
 Let $G_{n}=(V,E)$ be a graph with $n$ vertices $(a_{1},a _{2},\dots,a_n)$. The following statements hold.
\begin{itemize}
\item[(i)]{\it{[Boundedness]}} Let $\alpha\in[0,1)$. For all $0<p,q\leq\infty$ there exists a constant $C(n,p,q)>0$ such that
\begin{equation}\label{boundedness}
\var_q M_{\alpha,G_{n}}f\leq C(n,p,q)\var_p f.
\end{equation} 
for all functions $f:V\to\R$. 
\item[(ii)]{\it{[Continuity]}} Let $0<p,q\leq\infty$. Consider a sequence of functions 
$f_j:V\to\R$ such that $\|f_j-f_0\|_{{\text{BV}}_p(G)}\to0$ as $j\to\infty$. 
\begin{enumerate}
\item Assuming that $\lim_{j\to\infty}\min_{x\in V}|f(x)-f_j(x)|=0$. Then
\begin{equation}\label{continuity}
\var_q (M_{\alpha,G_{n}}f-M_{\alpha,G_n}f_j)\to0 \ \text{as}\ j\to\infty.
\end{equation}
\item \eqref{continuity} could fail to be true without the extra assumption that $\lim_{j\to \infty} \min_{x\in V}|f(x)-f_j(x)|=0$.
\end{enumerate}
\item[(iii)]  $M_{\alpha,G_n}$ is bounded and continuous from $(BV_p(G_n),\|\cdot\|_{BV_p(G_n)})$ to $(BV_p(G_n),\|\cdot\|_{BV_p(G_n)})$.
\end{itemize}
\end{theorem}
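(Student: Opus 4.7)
All three parts follow from a single uniform pointwise estimate on the oscillation of $M_{\alpha,G_n}f$. Since $G_n$ has only $n$ vertices, every ball satisfies $|B(v,r)|\le n$, with equality for $r$ large enough that $B(v,r)=V$. Hence, for every $v\in V$,
\[
n^{\alpha}\,\overline{|f|}_V\;\le\;M_{\alpha,G_n}f(v)\;\le\;n^{\alpha}\max_V|f|,\qquad \overline{|f|}_V:=\tfrac{1}{n}\sum_{m\in V}|f(m)|,
\]
so $|M_{\alpha,G_n}f(i)-M_{\alpha,G_n}f(j)|\le n^{\alpha}(\max_V|f|-\min_V|f|)$ for any two vertices $i,j\in V$ (adjacent or not). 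A shortest-path argument on the connected graph $G_n$ combined with the reverse triangle inequality yields $\max_V|f|-\min_V|f|\le D(G_n)\cdot\max_{e\in E}|f_e|\le D(G_n)\var_p f$, where $D(G_n)$ is the diameter of $G_n$ and $|f_e|$ is the absolute difference of $f$ across the edge $e$. Summing over the edges (or taking the max when $q=\infty$) produces
\[
\var_q M_{\alpha,G_n}f\;\le\;|E|^{1/q}\,n^{\alpha}\,D(G_n)\,\var_p f=:C(n,p,q,\alpha)\var_p f,
\]
which is \eqref{boundedness}.

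For (ii)(1), set $g_j:=f_j-f_0$. The assumption $\var_p(g_j)\to 0$ forces every individual edge-difference of $g_j$ to $0$, and the path argument from the first paragraph gives $\max_V g_j-\min_V g_j\to 0$, so $g_j$ is asymptotically constant on $V$. The extra hypothesis $\min_{x\in V}|g_j(x)|\to 0$ pins this constant at $0$, hence $f_j\to f_0$ uniformly on the finite set $V$. Since $M_{\alpha,G_n}$ is the pointwise maximum of finitely many continuous (sublinear, positively $1$-homogeneous) functionals $f\mapsto|B(v,r)|^{\alpha-1}\sum_{m\in B(v,r)}|f(m)|$ on $\R^V\cong\R^n$, we obtain $M_{\alpha,G_n}f_j(v)\to M_{\alpha,G_n}f_0(v)$ at every $v\in V$, and the finite-sum definition of $\var_q$ then delivers \eqref{continuity}.

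For (ii)(2) I plan to exhibit a constant sequence $f_j\equiv f_0+c$ with $\var_p(f_j-f_0)=0$ (so the $\widetilde{BV_p(G_n)}$-hypothesis holds trivially) but for which \eqref{continuity} fails. Choose $f_0$ sign-changing on $G_n$ so that $|f_0+c|$ differs from $|f_0|+c$ somewhere (e.g.\ on $S_3$ take $f_0(a_1)=1$, $f_0(a_2)=-1$, $f_0(a_3)=0$ and $c\ge 1$). A short case-by-case computation of the maximizing radius at each vertex then shows that $M_{\alpha,G_n}(f_0+c)-M_{\alpha,G_n}f_0$ is not a constant function on $V$, so $\var_q(M_{\alpha,G_n}f_j-M_{\alpha,G_n}f_0)>0$ for every $j$. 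Finally, (iii) reduces to what has already been established: boundedness follows by combining the $\var_q$-estimate from (i) with $|M_{\alpha,G_n}f(a_0)|\le n^{\alpha}\max_V|f|\le n^{\alpha}(|f(a_0)|+D(G_n)\var_p f)\le C\|f\|_{BV_p(G_n)}$; and $\|f_j-f_0\|_{BV_p(G_n)}\to 0$ directly yields both $\var_p(f_j-f_0)\to 0$ and $|f_j(a_0)-f_0(a_0)|\to 0$, which together with the path argument give uniform convergence $f_j\to f_0$, so the continuity argument from (ii)(1) closes the proof.

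The step I expect to require the most care is the counterexample in (ii)(2): because $M_{\alpha,G_n}$ acts on $|f|$ rather than $f$, translating $f_0$ by a constant can change which ball attains the maximum at each vertex, and one must verify that the resulting difference $M_{\alpha,G_n}(f_0+c)-M_{\alpha,G_n}f_0$ is genuinely non-constant on $V$ (exactly where the sign-changing structure of $f_0$ is used). Every other step is a routine combination of the uniform pointwise bound above with the equivalence of all $\ell^p$ seminorms on the finite-dimensional edge space $\R^E$.
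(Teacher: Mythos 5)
Your proposal is correct and follows essentially the same route as the paper: a uniform pointwise oscillation bound obtained by comparing each ball average to the global average and propagating along paths for (i), reduction to uniform convergence on the finite vertex set plus continuity of the (finite max of continuous functionals) maximal operator for (ii)(1), a shift-by-a-constant counterexample exploiting that $M_{\alpha,G_n}$ acts on $|f|$ for (ii)(2), and the extra $|f(a_0)|$ bookkeeping for (iii). Your $S_3$ example does work as claimed (with $c=1$ one computes $M_{S_3}(f_0+1)-M_{S_3}f_0=(1,0,5/6)$, which is non-constant), so the only step you deferred checks out.
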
\

\subsection{Optimal $l^2$ bounds for maximal operators on finite graphs.}
We are also interested in the $l^p$ norm of $M_G$ when acting on finite graphs. That is, to find the exact value of the expression
$$\sup_{f:V\to \mathbb{R}, f\neq 0}\frac{\|M_{G}f\|_{p}}{\|f\|_{p}}=:\|M_{G}\|_{p}, $$
where $\|g\|_{p}:=\left(\displaystyle\sum_{e\in V}|g(e)|^p\right)^{\frac{1}{p}},$ for $g:V\to \mathbb{R}.$

These norms were first treated by Soria and Tradacete, who found $\|M_{G}\|_p$ when $G=S_n$ and $G=K_n$, where $p\in (0,1)$ (see \cite[Proposition 2.7]{ST} and \cite[Theorem 3.1]{ST}). Their results rely strongly in Jensen's inequality for the function $x\to x^p$ where $p\le 1$, so those methods are not available when $p>1$. In fact, they claimed that this problem was difficult when $p>1$ (see \cite[Remark 2.8]{ST}).
The following inequality was proved by Soria and Tradacete [See \cite{ST}, Proposition 2.7] 
$$
\left(1+\frac{n-1}{n^2}\right)^{1/2}\leq  \|M_{K_n}\|_2\leq \left(1+\frac{n-1}{n}\right)^{1/2}.
$$
Our next result is a formula for the precise value of $\|M_{K_n}\|_{2}$ for $n\ge 2$. We also find  extremizers for all $n\geq2$. Moreover, we prove that $\|M_{K_{3n}}\|_{2}=\|M_{K_{3}}\|_{2},$ for all $n\geq 2$. We list these results as follows.

\begin{theorem}\label{p=2, complete graph}
Let $K_n=(V,E)$ be the complete graph with $n$ vertices $V=\{a_1,a_2,\dots,a_n\}$. Then we have
\begin{equation*}
   \|M_{K_n}\|_{2}= \max_{k\in\{\lfloor\frac{n}{3}\rfloor,\lceil\frac{n}{3}\rceil\}}\left(1-\frac{k}{2n}+\frac{(4kn-3k^2)^{1/2}}{2n}\right)^{1/2},
\end{equation*}
where $\lfloor x\rfloor:=\max\{k\in\Z; k\leq x\}$ (it is the integer part of $x$) and $\lceil x\rceil:=\min\{k\in\Z;k\geq x\}$.
\end{theorem}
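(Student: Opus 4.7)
The key structural feature of $K_n$ is that the only balls centered at a vertex $e \in V$ are $\{e\}$ and all of $V$, so
\[
M_{K_n}f(e) = \max\bigl(|f(e)|,\,\bar f\bigr), \qquad \bar f := \tfrac{1}{n}\sum_{v \in V}|f(v)|.
\]
Since $M_{K_n}$ depends only on $|f|$ and $\|\cdot\|_2$ is homogeneous, it suffices to maximize $R := \|M_{K_n}f\|_2^2/\|f\|_2^2$ over nonnegative $f$ with $\|f\|_2=1$. My plan is to show every extremizer reduces to a two-value function and then to optimize explicitly.

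First I perform two symmetrization steps on an extremizer $f$, partitioning $V = L \sqcup E \sqcup H$ according to whether $f < \bar f$, $f = \bar f$, or $f > \bar f$, with cardinalities $l,e,h$. \emph{Step (i):} Replace the values on $L \cup E$ by their common average $a'$; this leaves $\bar f$ and $\|M_{K_n}f\|_2^2$ unchanged, and does not increase $\|f\|_2^2$ by Cauchy--Schwarz. \emph{Step (ii):} Replace the values on $H$ by their common average $b := (\sum_H f)/h$; this preserves $\bar f$ but decreases $\sum_H f^2$ by some $\delta \ge 0$, so $\|M_{K_n}f\|_2^2$ and $\|f\|_2^2$ both drop by the same $\delta$. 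The step I expect to be the main obstacle is (ii): one must verify that $\|M_{K_n}f\|_2^2 - \|f\|_2^2 = (l+e)(\bar f^2 - (a')^2) > 0$ in the nondegenerate case, and then use that $\delta \mapsto (A - \delta)/(B - \delta)$ is strictly increasing when $A > B$, so (ii) strictly increases $R$. After both steps $f$ takes only two distinct values: $a := a'$ on $n-k$ vertices and $b$ on the remaining $k$, with $a < \bar f < b$.

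For each fixed $k$ I normalize $b = 1$ and let $a \in [0,1)$, so that $\bar f = ((n-k)a + k)/n$ and
\[
R(a) = \frac{(n-k)\bar f^2 + k}{(n-k)a^2 + k} = \frac{F(a)}{G(a)}.
\]
At a critical point $R = F'(a)/G'(a) = (n-k)\bar f/(na)$; substituting this relation into $R = F/G$ and clearing denominators leads to the quadratic
\[
n^2 R^2 - n(2n-k)\,R + (n-k)^2 = 0,
\]
with discriminant $n^2 k(4n-3k)$ and larger root
\[
R_k := \frac{2n-k + \sqrt{k(4n-3k)}}{2n}.
\]
A comparison with the endpoint values $R(0) = 1 + k(n-k)/n^2$ and $R(1) = 1$ (where one reduces the required inequality, after squaring, to $4(n-k)^3 > 0$) confirms that $R_k$ is the global maximum of $R(a)$ on $[0,1]$, attained at the corresponding root $a \in (0,1)$.

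Finally I maximize $R_k$ over integer $k \in \{0,1,\dots,n\}$. Regarding $R_k$ as a function of real $k \in (0,n)$, the critical equation $2n - 3k = \sqrt{k(4n-3k)}$ squares to $(n-k)(n-3k) = 0$, whose only interior root is $k = n/3$. Hence the integer maximum is attained at $\lfloor n/3 \rfloor$ or $\lceil n/3 \rceil$, and taking the square root gives the formula claimed in the theorem; an extremizer is the two-value function with $k$ vertices equal to $1$ and the remaining $n-k$ equal to the associated root $a$.
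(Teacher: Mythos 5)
Your proposal is correct, and it reaches the theorem by a genuinely different route than the paper. The paper's proof is a direct chain of inequalities: it expands $(n-k)m^2$ into squares and cross terms, bounds the cross terms within the high group and within the low group by plain AM--GM, and then handles the mixed terms $f(a_i)f(a_j)$ by a weighted AM--GM $2f(a_i)f(a_j)\le xf(a_i)^2+yf(a_j)^2$ with $xy=(n-k)^2$, where $x,y$ are chosen to equalize the resulting coefficients of the two groups; solving that balance equation produces the same quantity $1-\tfrac{k}{2n}+\tfrac{(4kn-3k^2)^{1/2}}{2n}$ that you obtain as the larger root of $n^2R^2-n(2n-k)R+(n-k)^2=0$, and the extremizers are then reverse-engineered from the equality cases. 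Your route instead symmetrizes first (averaging the sub-mean vertices, then the super-mean vertices, each step weakly increasing the Rayleigh quotient via the monotonicity of $\delta\mapsto(A-\delta)/(B-\delta)$ for $A\ge B$), reducing to two-valued functions, and then does a one-variable optimization. What your approach buys is transparency: the two-valued structure of extremizers is forced rather than guessed, and the "magic" choice of weights in the paper is replaced by a routine critical-point computation. What the paper's approach buys is that it never has to justify the symmetrization (in particular the verification $A-B=(l+e)(\bar f^2-(a')^2)\ge 0$, which you correctly flag as the key point of step (ii)). The one place you should tighten the write-up is attainment: knowing that every interior critical value is a root of the quadratic bounds $\sup R$ by $R_k$ together with the endpoint comparisons, but to conclude equality you should exhibit the critical point explicitly (one finds $a=1-1/R_k\in(0,1)$ and checks $R(a)=R_k$), which is the analogue of the paper's explicit extremizer $g_k$ with $\gamma=\frac{2(n-k)^2}{(4kn^3-3n^2k^2)^{1/2}-(3nk-2k^2)}$. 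The final maximization over integer $k$ near $n/3$ is identical in both arguments.
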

In particular, we have.
\begin{corollary}\label{p=2, n=3m}
If $n=3m$ for some $m\in\N$, then
$$\|M_{K_{3m}}\|_2=\left(\frac{4}{3}\right)^{1/2}.$$ For $n=2$ we have $\|M_{K_2}\|_2=\frac{(3+5^{1/2})^{1/2}}{2}$.
\end{corollary}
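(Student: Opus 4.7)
This corollary is a direct substitution of special values of $n$ into the explicit formula of Theorem \ref{p=2, complete graph}, so I would simply carry out the arithmetic rather than develop any new machinery.

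For the case $n=3m$, the key observation is that $n/3 = m \in \mathbb{N}$, which forces $\lfloor n/3 \rfloor = \lceil n/3 \rceil = m$. Thus the max over $k$ in the formula collapses to a single value $k=m$. I would then substitute $k = m$ and $n = 3m$ to get
\[
1 - \frac{m}{2(3m)} + \frac{(4m\cdot 3m - 3m^2)^{1/2}}{2(3m)} = 1 - \frac{1}{6} + \frac{(9m^2)^{1/2}}{6m} = 1 - \frac{1}{6} + \frac{1}{2} = \frac{4}{3},
\]
so that $\|M_{K_{3m}}\|_2 = (4/3)^{1/2}$, as claimed.

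For the case $n=2$, I would note that $\lfloor 2/3 \rfloor = 0$ and $\lceil 2/3 \rceil = 1$, so the maximum is taken over $k \in \{0,1\}$. Plugging in $k=0$ gives the value $1$ inside the outer square root, while $k=1$ gives
\[
1 - \frac{1}{4} + \frac{(8-3)^{1/2}}{4} = \frac{3 + 5^{1/2}}{4},
\]
which exceeds $1$. Hence the maximum is attained at $k=1$, yielding $\|M_{K_2}\|_2 = (3+5^{1/2})^{1/2}/2$.

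There is no real obstacle here; the entire content of the corollary is the observation that when $3 \mid n$ the two candidates $\lfloor n/3 \rfloor$ and $\lceil n/3 \rceil$ coincide and the discriminant $4kn - 3k^2$ becomes a perfect square $(3m)^2$, making the bound independent of $m$. Given Theorem \ref{p=2, complete graph}, the proof is a one-line verification in each case.
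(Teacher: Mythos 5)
Your proposal is correct and follows essentially the same route as the paper: both cases are direct consequences of the exact formula in Theorem \ref{p=2, complete graph}, and your arithmetic (including the observation that $\lfloor n/3\rfloor=\lceil n/3\rceil=m$ when $3\mid n$, and the evaluation of the max over $k\in\{0,1\}$ for $n=2$) checks out. The only difference is cosmetic: since the theorem asserts an equality you may rely on substitution alone, whereas the paper additionally exhibits the explicit extremizer $f=4$ on $n/3$ vertices and $f=1$ elsewhere to re-verify the lower bound, and it does not spell out the $n=2$ computation as you do.
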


Similarly, the following inequality was also proved by Soria and Tradacete [See \cite{ST}, Proposition 3.4] 
$$
\left(1+\frac{n-1}{4}\right)^{1/2}\leq \|M_{S_n}\|_2\leq \left(\frac{n+5}{2}\right)^{1/2}.
$$
Our next result is a formula for the precise value of $\|M_{S_n}\|_2$. Moreover, we find some extremizers.

\begin{theorem}\label{p=2, star graph}
Let $n\geq4$ and $S_n=(V,E)$ be the star graph with $n$ vertices $V=\{a_1,a_2,a_3,\dots,a_{n}\}$ and center at $a_1$. Then, the following holds.
\begin{equation}\label{p=2,star}
   \|M_{S_n}\|_2= \left(1+\frac{n-4}{8}+\frac{(n^2+8n)^{1/2}}{8}\right)^{1/2}.
\end{equation}
\end{theorem}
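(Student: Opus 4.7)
The plan is to prove matching lower and upper bounds, both exploiting the permutation symmetry of the star graph in its leaves.

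For the lower bound, I would test against $f(a_1) = \tau$ and $f(a_i) = 1$ for $i \geq 2$, where $\tau := ((n+2)+\sqrt{n^2+8n})/2$ is the positive root of the quadratic $\tau^2 - (n+2)\tau - (n-1) = 0$. Since $\tau \geq 1$ for $n \geq 4$, the inequality $(\tau+1)/2 \geq (\tau+n-1)/n$ (which is equivalent to $\tau \geq 1$) holds, so one checks directly that $M_{S_n}f(a_1) = \tau$ and $M_{S_n}f(a_i) = (\tau+1)/2$. Using the identity $(\tau+1)^2 = (n+4)\tau + n$ (deduced from the defining quadratic), one verifies
\[
\frac{\|M_{S_n}f\|_2^2}{\|f\|_2^2} = \frac{\tau^2 + (n-1)(\tau+1)^2/4}{\tau^2 + (n-1)} = \frac{(n+4)+\sqrt{n^2+8n}}{8}.
\]

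For the upper bound, first reduce to $f \geq 0$ (since $M_{S_n}f = M_{S_n}|f|$). For each vertex $e$ there is a radius $r(e)$, lying in $\{0,1\}$ if $e = a_1$ and in $\{0,1,2\}$ otherwise, such that $M_{S_n}f(e)$ equals the average of $f$ over $B(e,r(e))$. Writing $L_{(r_e)} \colon \R^n \to \R^n$ for the linear operator whose $e$-th output coordinate is the average of its input over $B(e,r_e)$, we have $M_{S_n}f = L_{(r(\cdot))}f$ and hence
\[
\|M_{S_n}f\|_2 \leq \|L_{(r(\cdot))}\|_{2 \to 2}\,\|f\|_2 \leq \Bigl(\max_{(r_e)} \|L_{(r_e)}\|_{2 \to 2}\Bigr)\|f\|_2,
\]
where the maximum is over finitely many selectors. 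By leaf-permutation symmetry, this norm depends only on $(r_{a_1}; n_0, n_1, n_2)$, where $n_k := \#\{i \geq 2 : r_{a_i} = k\}$. For the distinguished selector $(0; 0, n-1, 0)$, the matrix $L$ has center row $e_1^T$ and leaf row $(e_1+e_i)^T/2$, and restricting the spectral problem for $L^T L$ to the symmetric subspace of vectors $(a, b, \ldots, b)$ (where the top eigenvector must lie by leaf symmetry) reduces it to
\[
4\lambda^2 - (n+4)\lambda + 1 = 0,
\]
with largest root $((n+4)+\sqrt{n^2+8n})/8$, matching the lower bound.

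The hard part is to check that no other selector yields a larger operator norm. For each configuration $(r_{a_1}; n_0, n_1, n_2)$, the residual symmetry $S_{n_0} \times S_{n_1} \times S_{n_2}$ reduces the operator norm of $L_{(r_e)}$ to the top eigenvalue of a matrix of dimension at most four, which one can compute explicitly as the largest root of a quadratic or cubic in $\lambda$ whose coefficients are rational functions of $n$, $n_0$, $n_1$, $n_2$. The organising principle is that any row of $L$ of the diluted form $(1/n)\mathbf{1}^T$ (arising whenever $r_{a_i} = 2$ or $r_{a_1} = 1$) strictly lowers the top eigenvalue, so an optimal selector satisfies $r_{a_1}=0$ and $r_{a_i}\in\{0,1\}$; a further monotonicity check in $n_1$ singles out $(0; 0, n-1, 0)$ as extremal, completing the upper bound.
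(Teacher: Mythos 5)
Your lower bound is correct and in fact coincides with the paper's: your $\tau=\tfrac{(n+2)+\sqrt{n^2+8n}}{2}$ is exactly the paper's extremal value $\gamma=\tfrac{2(n-1)}{(n^2+8n)^{1/2}-(n+2)}$ after rationalizing, and the identity $(\tau+1)^2=(n+4)\tau+n$ does give the ratio $\tfrac{(n+4)+\sqrt{n^2+8n}}{8}=C_n^2$. The linearization of the upper bound through selectors $(r_e)$ and the spectral computation for the distinguished selector $(0;0,n-1,0)$ are also correct (the $2\times 2$ reduction has trace $\tfrac{n+4}{4}$ and determinant $\tfrac14$, giving $4\lambda^2-(n+4)\lambda+1=0$, and Perron--Frobenius legitimizes restricting to the symmetric subspace). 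This is a genuinely different route from the paper, which never forms $L^TL$: it bounds the contribution of the vertices realizing the global mean by Cauchy--Schwarz ($m^2\le\frac1n\sum f(a_i)^2$) and absorbs the cross terms $\tfrac12 f(a_1)f(a_i)$ by a weighted AM--GM with weights tuned so that the two diagonal coefficients coincide, reducing everything to an explicit maximization over the configuration counts $(k,r,s)$.

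The gap is precisely in what you call the hard part, and it is the actual content of the theorem's upper bound: you must show that no selector other than $(0;0,n-1,0)$ gives a larger operator norm, and this is asserted rather than proved. Your ``organising principle'' --- that replacing a row by the diluted row $\tfrac1n\mathbf{1}^T$ strictly lowers the top eigenvalue --- is not a positive-semidefinite comparison: $\tfrac14(e_1+e_i)(e_1+e_i)^T-\tfrac1{n^2}\mathbf{1}\mathbf{1}^T$ is indefinite (test against $e_j$, $j\neq 1,i$), so $\lambda_{\max}\bigl(A+\rho\rho^T\bigr)$ versus $\lambda_{\max}\bigl(A+\rho'(\rho')^T\bigr)$ requires an argument, not just the observation that $\|\rho'\|_2<\|\rho\|_2$. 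Likewise the ``monotonicity check in $n_1$'' is stated but not carried out, and for mixed configurations the reduced matrix is genuinely $3\times 3$ or $4\times 4$, so you would be comparing the largest root of a cubic with coefficients rational in $(n,n_0,n_1,n_2)$ against $C_n^2$ uniformly over $O(n^2)$ configurations. This is exactly the work the paper does in its two cases (center picks $f(a_1)$ versus center picks $m$) with the explicit optimization $x=\tfrac{[(r+9)(r+1)]^{1/2}-(r+3)}{8r}$ and the verification that the resulting expression is maximized at $r=n-1$, $s=0$. Until that comparison is executed (or replaced by a cleaner monotonicity argument), the upper bound is incomplete.
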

\begin{remark}
It was observed by Soria and Tradecete that in the case $n=2$ the optimal constant is $\frac{[3+5^{1/2}]^{1/2}}{2}$ [See remark 2.8 in \cite{ST}], this coincides with our formula \eqref{p=2,star}.\\ 
\end{remark}


\section{Proof of optimal bounds for the $p$-variation of maximal functions.}
We start by proving our results on $K_n.$\\
\subsection{Optimal bounds for the $p$-variation on $K_n$: Proof of Theorem \ref{theo 1}}

For every result listed in Theorem \ref{theo 1} we can see that, taking $f=\delta_{a_1}$ in the definition of ${\bf C}_{K_n,p},$ we have the following.
$${\bf C}_{K_n,p}\ge 1-\frac{1}{n}.$$
In the following we prove, in each case, that 
\begin{align}\label{grupperbound}
{\bf C}_{K_n,p}\le 1-\frac{1}{n}.
\end{align}

A very important tool in the case $p\le 1$ will be the Karamata's inequality, we include the precise statement of this for completeness:

\begin{lemma}[Karamata's Inequality]\label{Karamata's Inequality}
Let $I$ be an interval of the real line and let $f$ denote a real valued, convex function defined on $I$. If $x_1,\dots,x_n$ and $y_1,\dots,y_n$ are numbers in $I$ such that $(x_1,\dots,x_n)$ majorizes $(y_1,\dots, y_n)$, then
$$
f(x_1)+\dots+f(x_n)\geq f(y_1)+\dots +f(y_n).
$$
Here majorization means that $x_1,\dots, x_n$ and $y_1,\dots,y_n$ satisfies
$$
x_1\geq \dots \geq x_n\ \ \text{and}\ \ y_1\geq \dots\geq y_n,
$$
and we have the inequalities
$$
x_1+x_2+\dots+x_i\geq y_1+y_2+\dots+y_i \ \text{for all}\ \  i\in \{1,\dots,n\},
$$
and the equality
$$
x_1+x_2+\dots+x_n=y_1+y_2+\dots+y_n.
$$
\end{lemma}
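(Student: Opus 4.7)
The plan is to give the classical proof of Karamata's inequality via Abel summation (summation by parts) combined with the subgradient characterization of convexity. Since this is a well-known lemma, I expect no serious obstacles—only the minor technical care needed to handle points where $f$ may fail to be differentiable.

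First, I would invoke the standard fact that a convex function $f$ on an interval $I$ admits at every $y \in I$ a subgradient $c(y) \in \mathbb{R}$ satisfying
\begin{equation*}
f(x) \geq f(y) + c(y)(x - y) \quad \text{for all } x \in I,
\end{equation*}
and that $c(\cdot)$ may be chosen non-decreasing (for instance, taking the right-derivative, which exists everywhere in the interior and is non-decreasing for convex $f$). Setting $c_i := c(y_i)$, the ordering $y_1 \geq y_2 \geq \cdots \geq y_n$ yields $c_1 \geq c_2 \geq \cdots \geq c_n$, and evaluating the subgradient inequality at $y_i$ with $x = x_i$ gives
\begin{equation*}
f(x_i) - f(y_i) \geq c_i (x_i - y_i), \quad i = 1, \ldots, n.
\end{equation*}

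Summing these inequalities, it remains to show that $\sum_{i=1}^n c_i (x_i - y_i) \geq 0$. Here I would use Abel summation. Define the partial sums $S_k := \sum_{i=1}^k (x_i - y_i)$; the majorization hypothesis is precisely the statement that $S_k \geq 0$ for $1 \leq k \leq n-1$ and $S_n = 0$. The summation-by-parts identity
\begin{equation*}
\sum_{i=1}^n c_i (x_i - y_i) = \sum_{i=1}^{n-1} (c_i - c_{i+1}) S_i + c_n S_n
\end{equation*}
then writes the sum as a combination of products of non-negative factors (since $c_i - c_{i+1} \geq 0$ and $S_i \geq 0$), with the boundary term $c_n S_n$ vanishing. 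This delivers the desired non-negativity and hence the inequality.

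The only technical point I would flag as a potential obstacle is the monotone selection of subgradients when $f$ is merely convex rather than $C^1$. This is resolved by the classical regularity of convex functions on intervals: left- and right-derivatives exist at every interior point and are themselves non-decreasing, so any consistent choice from the subdifferential interval at each $y_i$ preserves the required monotonicity of $(c_i)_{i=1}^n$; at an endpoint of $I$ one argues by taking the appropriate one-sided derivative or by a short approximation. With this in place, the three-step argument above yields Karamata's inequality in full generality.
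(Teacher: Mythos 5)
Your proof is correct: the subgradient inequality at each $y_i$ combined with Abel summation over the non-negative partial sums $S_k$ is the standard argument for Karamata's inequality, and your handling of the non-smooth case via one-sided derivatives is adequate. Note that the paper itself supplies no proof of this lemma --- it is quoted as a classical tool ``for completeness'' --- so there is nothing to compare against; your argument fills that gap correctly.
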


\begin{remark}\label{karamata ineq $f(x)=-x^p$}
In the case $0<p\le 1$, in the proof of our Theorems \ref{theo 1} and \ref{theo 2}, we will use Karamata's inequality several times in the particular case when $f(x)=-x^p$. 
\end{remark}

\begin{proof}[Proof of Theorem \ref{theo 1} (i)]

Since by the triangular inequality we have that $\var_p{|f|}\leq \var_pf$ for any function $f:V\to\R$, so we can assume without loss of generality that $f$ is non-negative.
Let $$m:=m_n:=\frac{\sum_{i=1}^{n}f(a_{i})}{n},$$ and for all $k\in\{1,2,\dots,n-1\}$ we define
$$m_k=\frac{\sum_{i=1}^{k}f(a_i)}{k}.$$

Reordering if necessary, we can assume without loss of generality that 
$$
f(a_{n})\geq f(a_{n-1})\geq\dots\geq f(a_{r})\geq m> f(a_{r-1})\geq\dots\geq f(a_{1}),
$$
thus we have that
$$
M_{K_n}f(a_{i})=f(a_{i}) \ \forall\ i\geq r\ \ \  \text{and}\ \ \ M_{K_n}f(a_{i})=m \ \forall\ i<r.
$$
Let us keep in mind in the following that $$(\var_{p}M_{K_n}f)^p=\sum_{i,j\in \{r,\dots,n\}}|f(a_i)-f(a_j)|^p+(r-1)\sum_{i=r}^{n}|f(a_i)-m|^p.$$
Observe that $m_1\leq m_2\leq m_3\leq\dots\leq m_{n-1}\leq m$.
Therefore
\begin{eqnarray}
(\var_p M_{K_n}f)^p&\leq &(n-1)(f(a_{n})-m)^p
+(n-2)(f(a_{n-1})-m)^p\nonumber\\
&&\dots
+(r-1)(f(a_{r})-m)^{p}\nonumber\\
&\leq &(n-1)(f(a_{n})-m)^p
+(n-2)(f(a_{n-1})-m_{n-1})^p\nonumber\\
&&\dots
+(r-1)(f(a_{r})-m_{r})^{p}\label{key ineq 1}.
\end{eqnarray}

Then, we note that by H\"older inequality
\begin{eqnarray*}
f(a_i)-m_i\leq\frac{\sum_{t=1}^{i-1}{|f(a_i)-f(a_t)|}}{i}\leq \frac{\left(\sum_{t=1}^{i}{|f(a_i)-f(a_t)|^p}\right)^{1/p}(i-1)^{1/{p'}}}{i}.
\end{eqnarray*}
where $p'=\frac{p}{p-1}$ denotes the conjugate of $p$ as usual (remind that $p>1$). Combining the two previous estimatives we obtain
\begin{eqnarray*}
(\var_p M_{K_n}f)^p
&\leq &(n-1)(f(a_{n})-m)^p
+(n-2)(f(a_{n-1})-m_{n-1})^p\\
&&\dots
+(r-1)(f(a_{r})-m_{r})^{p}\\
&\leq& (n-1)\frac{\left(\sum_{t=1}^{n-1}{|f(a_n)-f(a_t)|^p}\right)^{p/p}(n-1)^{p/{p'}}}{n^p}\\
&&+ (n-2)\frac{\left(\sum_{t=1}^{n-2}{|f(a_{n-1})-f(a_t)|^p}\right)^{p/p}(n-2)^{p/{p'}}}{(n-1)^p}\\
&&\dots+(r-1)\frac{\left(\sum_{t=1}^{r-1}{|f(a_r)-f(a_t)|^p}\right)^{p/p}(r-1)^{p/{p'}}}{r^p}\\
&\leq& \left(\frac{n-1}{n}\right)^{p}{\sum_{t=1}^{n-1}{|f(a_{n})-f(a_t)|^p}}\\
&&+ \left(\frac{n-2}{n-1}\right)^{p}{\sum_{t=1}^{n-2}{|f(a_{n-1})-f(a_t)|^p}}\\
&&+\dots+ \left(\frac{r-1}{r}\right)^{p}{\sum_{t=1}^{r-1}{|f(a_{r})-f(a_t)|^p}}\\
&\leq& \left(\frac{n-1}{n}\right)^{p}(\var_pf)^p. 
\end{eqnarray*}
From where we conclude \eqref{grupperbound} in this case. Concluding the proof of this assertion of Theorem \ref{theo 1}.

\end{proof}

\subsubsection{Case $p\le 1$: Proof of assertion (ii) and (iii) in Theorem \ref{theo 1}. }
We keep the notation of the previous proof and the assumption that
$$f(a_n)\ge \dots f(a_r)\ge m> \dots f(a_1).$$
For $0<p\le 1$, the simplest case of the theorem  is when $r=n.$
\begin{lemma}
For every $0<p\le 1$ and $n\ge 2$, if  $r=n,$ we have 
${\bf C}_{K_n,p}=1-\frac{1}{n}.$
\end{lemma}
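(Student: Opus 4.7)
My approach is to reduce the target inequality $\var_p M_{K_n}f \le \tfrac{n-1}{n}\var_p f$ to a one-parameter scalar inequality and then prove that by subadditivity together with a short rearrangement argument. Since $r=n$, we have $M_{K_n}f(a_n)=f(a_n)$ and $M_{K_n}f(a_i)=m$ for every $i<n$, so only the $n-1$ edges from $a_n$ contribute to $\var_p M_{K_n}f$, giving $(\var_p M_{K_n}f)^p=(n-1)(f(a_n)-m)^p$. Writing $b_i:=f(a_n)-f(a_i)\ge 0$ for $i=1,\ldots,n-1$ (so $b_1\ge\cdots\ge b_{n-1}>0$ by our ordering) and setting $b_n:=0$, the identity $f(a_n)-m=\tfrac{1}{n}\sum_{i=1}^{n-1}b_i$ together with $(\var_p f)^p=\sum_{1\le i<j\le n}(b_i-b_j)^p$ (since $b_i-b_j=f(a_j)-f(a_i)\ge 0$ for $i<j$) turns the desired inequality into
\[
(n-1)\bar b^{\,p}\ \le\ \sum_{1\le i<j\le n}(b_i-b_j)^p,\qquad \bar b:=\frac{1}{n-1}\sum_{i=1}^{n-1}b_i.
\]

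I would then prove this scalar inequality in four short steps. First, for $0<p\le 1$ the map $x\mapsto x^p$ is subadditive on $[0,\infty)$, so $b_i^p=((b_i-b_j)+b_j)^p\le(b_i-b_j)^p+b_j^p$, i.e.\ $(b_i-b_j)^p\ge b_i^p-b_j^p$ whenever $b_i\ge b_j\ge 0$. Second, summing over $1\le i<j\le n$ and noting that $b_k^p$ appears with coefficient $(n-k)-(k-1)=n+1-2k$ (and $b_n^p=0$ kills the last term), we obtain
\[
\sum_{1\le i<j\le n}(b_i-b_j)^p\ \ge\ \sum_{k=1}^{n-1}(n+1-2k)\,b_k^p.
\]
Third, isolating $k=1$ leaves the tail $\sum_{k=2}^{n-1}(n+1-2k)b_k^p$; its coefficients are decreasing in $k$ and sum to zero, and since $(b_k^p)_{k=2}^{n-1}$ is also decreasing in $k$, Chebyshev's sum inequality (or, equivalently, the pairing of $k$ with $n+1-k$, which gives $(n+1-2k)(b_k^p-b_{n+1-k}^p)\ge 0$) makes the tail nonnegative. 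Fourth, $b_1\ge\bar b$ (max $\ge$ mean) combined with monotonicity of $x\mapsto x^p$ on $[0,\infty)$ yields $(n-1)b_1^p\ge(n-1)\bar b^{\,p}$, completing the chain.

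Combined with the lower bound ${\bf C}_{K_n,p}\ge 1-\tfrac{1}{n}$ from the extremizer $f=\delta_{a_n}$ (for which each of the four inequalities above is in fact an equality, confirming sharpness), this establishes the lemma. I expect the rearrangement step, which controls the mixed signs of the coefficients $(n+1-2k)$, to be the main subtlety; the remaining three steps are routine manipulations.
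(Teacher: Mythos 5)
Your proof is correct, but it takes a genuinely different route from the paper's. The paper's argument for this lemma is a three-line chain: it first bounds the deviation from the mean by the gap to the minimum, $f(a_n)-m\le\frac{n-1}{n}\left(f(a_n)-f(a_1)\right)$ (using only that every value is at least $f(a_1)$), then splits $(n-1)\left(f(a_n)-f(a_1)\right)^p$ via subadditivity of $x\mapsto x^p$ into $\left(f(a_n)-f(a_1)\right)^p+\sum_{i=2}^{n-1}\left[(f(a_n)-f(a_i))^p+(f(a_i)-f(a_1))^p\right]$, and finally observes that this last expression is a sub-sum of $(\var_p f)^p$ --- so every edge not incident to $a_1$ or $a_n$ is simply discarded. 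You instead keep the exact identity $f(a_n)-m=\frac{n-1}{n}\,\bar b$ and use \emph{all} $\binom{n}{2}$ pairwise differences, which forces you to track the signed coefficients $n+1-2k$ and to dispose of the negative ones with the Chebyshev/pairing step; that step is valid (the coefficients and the $b_k^p$ are similarly ordered and the coefficients sum to zero over $k=2,\dots,n-1$), and the remaining steps check out. Both proofs run on the same engine --- subadditivity of $x\mapsto x^p$ for $0<p\le1$ plus an elementary comparison of a value with an average --- but the decompositions differ: the paper's is shorter and only ever looks at the two extreme vertices, while yours uses the full variation rather than throwing most of it away, at the cost of an extra rearrangement lemma, and it makes transparent exactly where equality holds for the Dirac-delta extremizer.
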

\begin{proof}
This can be proved directly by 
\begin{align*}
(n-1)|f(a_n)-m|^p\le &(n-1)\left|\frac{n-1}{n}(f(a_n)-f(a_1))\right|^p\\
&\le \left(\frac{n-1}{n}\right)^p(|f(a_n)-f(a_1)|^p+\sum_{i=2}^{n-1} (|f(a_n)-f(a_i)|^p+|f(a_i)-f(a_1)|^p)\\
&\le \left(\frac{n-1}{n}\right)^p (\var_p f)^p,
\end{align*}
where, in the second inequality, we used that if $a,b\ge 0,$ then $(a+b)^p\le a^p+b^p.$ 
\end{proof}
Therefore, in the following we assume that $r<n.$
\begin{proof}[Proof of Theorem \ref{theo 1} (ii)]
Now we prove the assertion for $n=4.$ \smallskip
Since the case $r=4$ was already solved, we have two cases left. 
First we treat the case $r=3.$
\smallskip

{\it Case $r=3$. }We have the following inequality.
\begin{align}\label{grline}
    \left(\frac{3}{4}\right)^p(|f(a_4)-f(a_3)|^p+|f(a_3)-f(a_2)|^p+|f(a_2)-f(a_1)|^p)\ge |f(a_4)-f(a_3)|^p+|f(a_3)-m|^p.
\end{align}\\

{\it Step 1: Proving \eqref{grline}.} In order to prove this, we write $f(a_3)-f(a_2)=x$ and $f(a_4)-f(a_3)=y,$ then $m=\frac{f(a_1)+3f(a_2)+2x+y}{4}$ and 
\begin{equation}\label{usefule ineq tool 2}
\frac{f(a_1)+3f(a_2)+2x+y}{4}\le f(a_2)+x \implies f(a_1)+y\le f(a_2)+2x,
\end{equation} 
also 
\begin{equation}\label{useful ineq tool 1}
m\ge f(a_2)\implies f(a_2)\le f(a_1)+2x+y.
\end{equation}
Then
\begin{align*}
    \left(\frac{3}{4}\right)^p(|f(a_4)-f(a_3)|^p+|f(a_3)-f(a_2)|^p+|f(a_2)-f(a_1)|^p)= \left(\frac{3}{4}\right)^p(y^p+x^p+(f(a_2)-f(a_1))^p),
\end{align*}
Consider first the case where $f(a_2)-f(a_1)+2x\le 4y.$ Here, we observe that $f(a_2)-f(a_1)+2x\in [f(a_2)-f(a_1)+2x-y,4y],$ and since $(f(a_2)-f(a_1)+2x)+3y=(f(a_2)-f(a_1)+2x-y)+4y$ we have that $3y\in [f(a_2)-f(a_1)+2x-y,4y],$ and then, by Karamata's inequality, we have $$(3y)^p+(f(a_2)-f(a_1)+2x)^p\ge (4y)^p+(f(a_2)-f(a_1)+2x-y)^p.$$ Now, since  $(3x)^p+(3(f(a_2)-f(a_1)))^p\ge (f(a_2)-f(a_1)+2x)^p,$ we obtain
\begin{align}\label{grline2}
(3y)^p+(3x)^p+(3(f(a_2)-f(a_1)))^p\ge (4y)^p+(f(a_2)-f(a_1)+2x-y)^p,
\end{align}
from where \eqref{grline} follows by observing that $4(f(a_3)-m)=4(f(a_3)-\frac{f(a_1)+3f(a_2)+2x+y}{4})=f(a_2)-f(a_1)-2x-y\le f(a_2)-f(a_1)+2x-y$.\

Now we consider the other case, where $f(a_2)-f(a_1)+2x\ge 4y.$ We do some previous considerations. First, we have that $3^p\ge \frac{4^p-3^p}{4^p}+1$ and $3^p\ge \frac{4^p-3^p}{2^p}+2^p,$ both consequences of the following application of the AM-GM inequality  $$12^p+3^p>6^p+3^p\ge 2(18)^{p/2}> 2(4)^p.$$
Also, let us observe that \begin{align}\label{gr1234}
\begin{split}
(4^p-3^p)\left(\frac{f(a_2)-f(a_1)}{4}+\frac{x}{2}\right)^p&\le (4^p-3^p)\left(\frac{f(a_2)-f(a_1)}{4}\right)^p+(4^p-3^p)\left(\frac{x}{2}\right)^p\\
&=\frac{4^p-3^p}{4^p}(f(a_2)-f(a_1))^p+\frac{4^p-3^p}{2^p}x^p
\end{split}
\end{align}
and \begin{align}\label{gr12345}
(f(a_2)-f(a_1)+2x)^p\le (f(a_2)-f(a_1))^p+2^px^p.
\end{align}
Now, by considering that (here we use $f(a_2)-f(a_1)+2x\ge 4y$)
$$(4^p-3^p)\left(\frac{f(a_2)-f(a_1)}{4}+\frac{x}{2}\right)^p+(f(a_2)-f(a_1)+2x)^p\ge (4^p-3^p)(y)^p+(f(a_2)-f(a_1)+2x-y)^p,$$
we have that by \eqref{gr1234} and \eqref{gr12345} (and the already mentioned inequalities for $3^p$): 
\begin{align*}
(3(f(a_2)-f(a_1)))^p+(3x)^p&\ge \left(\frac{4^p-3^p}{4^p}+1\right)(f(a_2)-f(a_1))^p+\left(\frac{4^p-3^p}{2^p}+2^p\right)x^p\\
&\ge(4^p-3^p)\left(\frac{f(a_2)-f(a_1)}{4}+\frac{x}{2}\right)^p+(f(a_2)-f(a_1)+2x)^p\\
&\ge (4^p-3^p)y^p+(f(a_2)-f(a_1)+2x-y)^p.
\end{align*}
From this \eqref{grline2} follows, and therefore we conclude Step 1. 
\\
In the following, we also need the inequality 
\begin{align}\label{gr2}
\left(\frac{3}{4}\right)^p(|f(a_4)-f(a_2)|^p+|f(a_3)-f(a_1)|^p)\ge |f(a_4)-m|^p+|f(a_3)-m|^p.
\end{align}\\

{\it Step 2: Proving \eqref{gr2}.} We have that \eqref{gr2} is equivalent to
\begin{align*}
(3x+3y)^p+(3x+3(f(a_2)-f(a_1)))^p\ge (f(a_2)-f(a_1)+2x+3y)^p+(f(a_2)-f(a_1)+2x-y)^p,    
\end{align*}
since $4(f(a_4)-m)=4(f(a_2)+x+y-\frac{f(a_1)+3f(a_2)+2x+y}{4})=f(a_2)-f(a_1)+2x+3y$ and $4(f(a_3)-m)=4(f(a_4)-m)-4y=f(a_2)-f(a_1)+2x-y.$ 
Here we distinguish among two cases, the first when $x+4y\ge f(a_2)-f(a_1).$ Here, by the concavity of the function $x\to x^p$, since $$4x+2(f(a_2)-f(a_1))+4y\ge 3x+3y\ge 2x+f(a_2)-f(a_1)-y,$$
and
$$4x+2(f(a_2)-f(a_1))+4y\ge 3x+3(f(a_2)-f(a_1))\ge 2x+f(a_2)-f(a_1)-y,$$
by Karamata's inequality we have \begin{align*}
(3x+3y)^p+(3x+3(f(a_2)-f(a_1)))^p&\ge (4x+2(f(a_2)-f(a_1))+4y)^p+(2x+(f(a_2)-f(a_1))-y)^p\\
&\ge ((f(a_2)-f(a_1))+2x+3y)^p+(f(a_2)-f(a_1)+2x-y)^p, 
\end{align*}
from where \eqref{gr2} follows. 

Now we deal with the the other case, where $x+4y\le f(a_2)-f(a_1).$ We can prove that (this is independent to $x+4y\le f(a_2)-f(a_1)$):
 \begin{align}\label{gr3}
 (f(a_2)-f(a_1)+2x+2y)^p+(f(a_2)-f(a_1)+2x)^p\ge (f(a_2)-f(a_1)+2x+3y)^p+(f(a_2)-f(a_1)+2x-y)^p,
 \end{align} by Karamata's inequality. Also, since $y\le \frac{f(a_2)-f(a_1)}{4}$ and $2x+y\ge f(a_2)-f(a_1)$ (remind 
 \eqref{useful ineq tool 1}), we have $x\ge \frac{3(f(a_2)-f(a_1))}{8},$ therefore we obtain (by just expanding)
 $$(3x)(3x+3(f(a_2)-f(a_1)))\ge \left(\frac{3}{2}(f(a_2)-f(a_1))+2x\right)(f(a_2)-f(a_1)+2x)$$
 and, as a consequence,
 \begin{align}\label{grlnineq}
 \log(3x)+\log(3x+3(f(a_2)-f(a_1)))\ge \log\left(\frac{3}{2}(f(a_2)-f(a_1))+2x\right)+\log(f(a_2)-f(a_1)+2x).
 \end{align}
 Let us observe that, since $x\le x+4y\le f(a_2)-f(a_1),$ we have  \begin{align}\label{grorder}
\log(3x)\le \log(f(a_2)-f(a_1)+2x)\le \log\left(\frac{3}{2}(f(a_2)-f(a_1)+2x\right),
\end{align} let us take then $v:=\log(f(a_2)-f(a_1)+2x)+\log(\frac{3}{2}(f(a_2)-f(a_1)+2x)-\log(3x),$ by \eqref{grlnineq} we have $v\le \log(3x+3(f(a_2)-f(a_1)))$ and by \eqref{grorder} we have $v\ge \log\left(\frac{3}{2}(f(a_2)-f(a_1))+2x\right).$ By Karamata's inequality, now applied to the convex function $x\to e^{px}$, by considering \eqref{grorder} we have
 \begin{align*}
  e^{p\log (\frac{3}{2}(f(a_2)-f(a_1))+2x)}+e^{p\log(f(a_2)-f(a_1)+2x)}\le e^{p\log(3x)}+e^{pv}\le e^{p\log(3x)}+e^{p\log(3x+3(f(a_2)-f(a_1)))},   
 \end{align*}
 and therefore:
 \begin{align*}
(3x+3y)^p+(3x+3(f(a_2)-f(a_1)))^p &\ge (3x)^p+(3x+3(f(a_2)-f(a_1)))^p=e^{p\log(3x)}+e^{p\log(3x+3(f(a_2)-f(a_1)))}\\
&\ge e^{p\log(\frac{3}{2}(f(a_2)-f(a_1))+2x)}+e^{p\log(f(a_2)-f(a_1)+2x)}\\
&=(\frac{3}{2}(f(a_2)-f(a_1))+2x)^p+(f(a_2)-f(a_1)+2x)^p\\ 
&\ge  (f(a_2)-f(a_1)+2x+2y)^p+(f(a_2)-f(a_1)+2x)^p,
 \end{align*}
where we used $f(a_2)-f(a_1)\ge 4y$ in the last inequality. Therefore we obtain, by combining this with \eqref{gr3}, the desired inequality \eqref{gr2}.\\ 

{\it Step 3: Conclusion of case $r=3$.} The case $r=3$ then follows by combining \eqref{gr2}, \eqref{grline} and the inequality $\left(\frac{3}{4}\right)^p(f(a_4)-f(a_1))^p= \left(f(a_4)-\frac{f(a_4)+3f(a_1)}{4}\right)^p\ge (f(a_4)-m)^p.$ In fact, adding these three inequalities we obtain that
\begin{align*}
\left(\frac{3}{4}\right)^p(\var_{p}f)^p&=\left(\frac{3}{4}\right)^p\left(|f(a_4)-f(a_3)|^p+|f(a_3)-f(a_2)|^p+|f(a_2)-f(a_1)|^p\right)\\
&\ \ \ \ \ +\left(\frac{3}{4}\right)^p\left(|f(a_4)-f(a_2)|^p+|f(a_3)-f(a_1)|^p\right)+\left(\frac{3}{4}\right)^p|f(a_4)-f(a_1)|^p\\
&\geq \left(|f(a_4)-f(a_3)|^p+|f(a_3)-m|^p\right)+\left(|f(a_4)-m|^p+|f(a_3)-m|^p\right)+|f(a_4)-m|^p\\
&=(\var_{p}M_{K_4}f)^{p}.
\end{align*}
\smallskip{}

{\it Case $r=2$.} Here, we have that $m\le f(a_2)\implies 2x+y\le f(a_2)-f(a_1)$ since $f(a_2)-m=f(a_2)-\frac{f(a_1)+3f(a_2)+2x+y}{4}=\frac{f(a_2)-f(a_1)-2x-y}{4}.$ We prove first the inequality
\begin{align}\label{grliner=2}
|f(a_4)-f(a_3)|^p+|f(a_3)-f(a_2)|^p+|f(a_2)-m|^p&\le \left(\frac{3}{4}\right)^p(|f(a_4)-f(a_3)|^p\\
&+|f(a_3)-f(a_2)|^p+|f(a_2)-f(a_1)|^p)\nonumber,
\end{align}\\

{\it Step 1: Proving \eqref{grliner=2}.} Our desired inequality \eqref{grliner=2} is equivalent (since $4(f(a_2)-m)=4(f(a_2)-\frac{f(a_1)+3f(a_2)+2x+y}{4})=f(a_2)-f(a_1)-2x-y$) to
$$(4y)^p+(4x)^p+(f(a_2)-f(a_1)-2x-y)^p\le (3y)^p+(3x)^p+(3(f(a_2)-f(a_1)))^p.$$
We observe that $$(4y)^p+(4x)^p+(f(a_2)-f(a_1)-2x-y)^p\le (4y)^p+(4x)^p+(f(a_2)-f(a_1))^p,$$
also, since $x\le \frac{f(a_2)-f(a_1)}{2}$ and $y\le f(a_2)-f(a_1)$, we have $$(4^p-3^p)(x^p+y^p)\le (4^p-3^p)\left[\left(\frac{1}{2}\right)^p+1\right](f(a_2)-f(a_1))^p\le (3^p-1)(f(a_2)-f(a_1))^p,$$ because $4^p+8^p+2^p\le 2(6)^p+3^p$ by Jensen inequality. Therefore  $$(4y)^p+(4x)^p+(f(a_2)-f(a_1)-2x-y)^p\le 4^px^p+4^py^p+(f(a_2)-f(a_1))^p\le (3y)^p+(3x)^p+(3(f(a_2)-f(a_1)))^p,$$
from where it follows \eqref{grliner=2}. This conclude Step 1.

Also, we have that 
\begin{align}\label{grlast}
(|f(a_4)-f(a_2)|^p+|f(a_3)-f(a_1)|^p)\left(\frac{3}{4}\right)^p\ge (|f(a_4)-f(a_2)|^p+|f(a_3)-m|^p),
\end{align}
{\it Step 2: Proving \eqref{grlast}.} We have that \eqref{grlast} is equivalent to $$(4x+4y)^p+(f(a_2)-f(a_1)+2x-y)^p\le (3x+3y)^p+(3(x+f(a_2)-f(a_1)))^p,$$
this happens since $f(a_4)-f(a_2)=x+y,$ $f(a_3)-f(a_1)=x+f(a_2)-f(a_1)$ and $f(a_3)-m=f(a_2)+x-\frac{f(a_1)+3f(a_2)+2x+y}{4}=\frac{f(a_2)-f(a_1)+2x-y}{4}.$
Also, since by Jensen  $4^p+2^p\le 2(3)^p$ and because of $m\le f(a_2)\implies 2x+y\le f(a_2)-f(a_1)\implies x+y\le f(a_2)-f(a_1)$ , we have

\begin{align*}
(4^p-3^p)(x+y)^p+(2(f(a_2)-f(a_1)))^p&\le (3(f(a_2)-f(a_1)))^p\le (3(f(a_3)-f(a_1)))^p\\
&=(3(x+f(a_2)-f(a_1)))^p.     
\end{align*}
By observing that $2x\le f(a_2)-f(a_1)$ and then $f(a_2)-f(a_1)+2x-y\le 2(f(a_2)-f(a_1)$ we have
\begin{align*}
(4^p-3^p)(x+y)^p+(f(a_2)-f(a_1)+2x-y)^p&\le (4^p-3^p)(x+y)^p+(2(f(a_2)-f(a_1)))^p\\
&\le (3(x+f(a_2)-f(a_1)))^p,
\end{align*}
from where we obtain \eqref{grlast} and therefore we conclude Step 2.

{\it Step 3: Conclusion of $r=2$ and $n=4.$} The case $r=2,$ and thus our result in $n=4$ follows by combining \eqref{grliner=2}, \eqref{grlast} and the inequality $f(a_4)-m\le (\frac{3}{4})(f(a_4)-f(a_1)).$ We conclude this part and thus the assertion.
\end{proof}

Now we prove our assertion for general $n$ and $p\in [\frac{\log(4)}{\log(6)},1].$\\
\begin{proof}[Proof of Theorem \ref{theo 1} (iii)]
The strategy that we follow in order to prove this assertion is the following inductive argument. To prove \eqref{grupperbound} is equivalent to prove that for each $f:V\to \mathbb{R}_{\ge 0},$ we have 
\begin{align}\label{grupperbound2}
\sum_{i,j\in \{r,\dots,n\}}|f(a_i)-f(a_j)|^p+\sum_{i=r}^{n}|f(a_i)-m|^p\le \left(1-\frac{1}{n}\right)^p(\sum_{i,j\in \{1,\dots,n\}}|f(a_i)-f(a_j)|^p).
\end{align}
In order to prove the inequality above  for $\frac{\log(4)}{\log(6)}\le p\le 1,$ we establish in this range a control over the contribution of the vertex $a_r$ to \eqref{grupperbound2}, that is, we prove that 
\begin{align}\label{grmaintool0}
    \sum_{i=r+1}^n|f(a_i)-f(a_r)|+(n-r)|f(a_r)-m|^p\le \left(1-\frac{1}{n}\right)^p(\sum_{i=1}^{n}|f(a_r)-f(a_i)|^p.
\end{align}
Then, we observe that the analogous of \eqref{grupperbound2} for the graph $K_{n-1},$  obtained by deleting the vertex $a_r$ (and the respective edges) from $K_n,$ and a good choice of $\widetilde{f}:V\setminus \{a_r\}$ would give us a proper control over the edges of $K_n$ that were not considered in \eqref{grmaintool0}. By combining this control with \eqref{grmaintool0} we would obtain \eqref{grupperbound2} for our initial $f$.  Thus, the induction concludes our result (since we know that the result holds for $n=3$).

We proceed to our proof by proving first \eqref{grmaintool0}.
 We write $x_i:=f(a_i)-f(a_r)$ for $i=n,\dots,r+1$, $u=f(a_r)-m$, $y_i=f(a_r)-f(a_i)$ for $i=r-1,\dots,1$. We have then, since $$m=\frac{\sum_{i=1}^nf(a_i)}{n}=\frac{\sum_{i=1}^{r-1}f(a_r)-y_i+f(a_r)+\sum_{i=r+1}^{n}f(a_r)+x_i}{n}=f(a_r)+\frac{\sum_{i=r+1}^{n}x_i-\sum_{i=1}^{r}y_i}{n},$$ that  $\displaystyle \sum_{i=r+1}^n x_i+nu=\displaystyle \sum_{i=1}^{r-1} y_i.$ 
Then, \eqref{grmaintool0} is equivalent to:

\begin{align}\label{grmaintool}
\sum_{i=r+1}^{n}x_i^p+(r-1)u^p\le \left(1-\frac{1}{n}\right)^p\left(\sum_{i=r+1}^{n}x_i^p+\sum_{i=1}^{r-1}y_i^p\right).
\end{align}
In order to prove that let us see that, since $y_i\ge u$ for every $i=1,\dots,r-1$ and $\displaystyle\sum_{i=1}^{r-1}y_i=(r-2)u+\left(\displaystyle\sum_{i=1}^{r-1}y_i-(r-2)u\right)$, we have $\displaystyle\sum_{i=1}^{r-1}y_i-(r-2)u\ge y_j\ge u$ for every $j=1,\dots,r-1.$ Moreover, $y_1\ge y_2\ge \dots \ge y_{r-1}$ and  $\left(\displaystyle\sum_{i=1}^{r-1}y_i-(r-2)u\right)+ku\ge \displaystyle\sum_{i=1}^{k+1}y_i$ for each $k=0,\dots r-2$ since $\displaystyle\sum_{i=k+2}^{r-1}y_i\ge (r-2-k)u,$ then by Karamata's inequality, we have (we also use here that $\displaystyle\sum_{i=r+1}^{n}x_i+nu=\displaystyle\sum_{i=1}^{r-1}y_i$) $$\sum_{i=1}^{r-1}y_i^p\ge (r-2)u^p+\left(\sum_{i=1}^{r-1}y_i-(r-2)u\right)^p=(r-2)u^p+\left((n-r+2)u+\sum_{i=r+1}^{n}x_i\right)^p,$$
also, by Jensen's inequality we have $\left((n-r+2)u+\displaystyle\sum_{i=r+1}^{n}x_i\right)^p\ge 2^{p-1}\left[((n-r+2)u)^p+\left(\displaystyle\sum_{i=r+1}^nx_i\right)^p\right].$ Therefore, combining the two previous inequalities we obtain
\begin{align*}
 \left(1-\frac{1}{n}\right)^p\left(\sum_{i=r+1}^{n}x_i^p+\sum_{i=1}^{r-1}y_i^p\right)\ge \left(1-\frac{1}{n}\right)^p\left[\sum_{i=r+1}^nx_i^p+(r-2)u^p+2^{p-1}((n-r+2)u)^p+2^{p-1}\left(\sum_{i=r+1}^nx_i\right)^p\right].
\end{align*}
Then, since $\left(\displaystyle\sum_{i=r+1}^{n} x_i\right)^p\ge (n-r)^{p-1}\left(\displaystyle\sum_{i=r+1}^{n} x_i^p\right)$ (by H\"older Inequality) we get 
\begin{align*}
 \left(1-\frac{1}{n}\right)^p\left(\sum_{i=r+1}^{n}x_i^p+\sum_{i=1}^{r-1}y_i^p\right)\ge \left(1-\frac{1}{n}\right)^p\left(\sum_{i=r+1}^nx_i^p(1+2^{p-1}(n-r)^{p-1})+u^p(r-2+2^{p-1}(n-r+2)^p)\right).
\end{align*}
Then, in order to obtain \eqref{grmaintool} is enough 
\begin{align}\label{grmaintoolpart1}
1\le \left(1-\frac{1}{n}\right)^p\left(1+2^{p-1}(n-r)^{p-1}\right)
\end{align}
and 
\begin{align}\label{grmaintoolpart2}
r-1\le \left(1-\frac{1}{n}\right)^p\left(r-2+2^{p-1}(n-r+2)^p\right).
\end{align}
To prove \eqref{grmaintoolpart1} is enough (since $r\le n-1\implies n-r\ge 1$ and $n\ge 3\implies 1-\frac{1}{n}\ge \frac{2}{3}$)
\begin{align*}
1\le \left(\frac{2}{3}\right)^p(1+2^{p-1})    
\end{align*}
and that is equivalent to $\left(\frac{3}{2}\right)^p\le 1+2^{p-1},$ which follows since $p\in(0,1)$ and then $(\frac{3}{2})^p\leq \frac{3}{2}=1+\frac{1}{2}\leq 1+\frac{2^p}{2}$. 
Now, to prove \eqref{grmaintoolpart2}, we observe that (since $p\ge \frac{\log(4)}{\log(6)}$, $n-r+2\ge 3$),  $$2^{p-1}(n-r+2)^p\ge 2^{p-1}(3)^p=\frac{6^p}{2}\ge 2,$$ and therefore 
$$\left(1-\frac{1}{n}\right)^p(r-2+2^{p-1}(n-r+2)^p)\ge \left(1-\frac{1}{n}\right)^p(r)\ge \left(1-\frac{1}{n}\right)r\ge (r-1),$$
where in the last inequality we use $1-\frac{1}{n}\ge 1-\frac{1}{r}$ since $r\le n.$ From this we conclude \eqref{grmaintoolpart2}  and thus \eqref{grmaintool} follows. 

\

Now we follow with the remaining steps of our proof. Assume that inequality $\var_{p}M_{K_{n-1}}f\le (1-\frac{1}{n-1})\var_{p}f,$ 
holds for every $\widetilde{f}:V(K_{n-1})\to \mathbb{R}_{\ge 0}$ in $K_{n-1},$ whenever $p\ge \frac{\log(4)}{\log(6)}$ (it holds for $n=3,4$). Then, if $b_1,\dots,b_{n-1}$ are the vertex of the $K_{n-1}$ graph, we define $\widetilde{f}$ as $\widetilde{f}(b_i)=f(a_{i+1})$ for $i=r,\dots,n-1$ and $\widetilde{f}(b_i)=f(a_{i})$ for $i=1,\dots,r-1.$ We write $\widetilde{m}=\frac{\sum_{i=1}^{n-1}\widetilde{f}(b_i)}{n-1}.$ Since $f(a_r)\geq m$, we observe that $\widetilde m=\frac{nm-f(a_r)}{n-1}\leq m$. Then, we write $\widetilde{f}(b_{s})\ge \widetilde{m}>\widetilde{f}(b_{s-1})$, where we observe that $s\le r.$ By the inductive hypothesis, we have
\begin{align}\label{induction1gr}
\sum_{i,j\in \{s,\dots,n-1\}}|\widetilde{f}(a_i)-\widetilde{f}(a_j)|^p+(s-1)\sum_{i=s}^{n-1}|\widetilde{f}(a_i)-\widetilde{m}|^p\le \left(1-\frac{1}{n-1}\right)^p\left(\sum_{i,j\in \{1,\dots,n-1\}}|\widetilde{f}(a_i)-\widetilde{f}(a_j)|^p\right).
\end{align}
By noticing that
\begin{align}\label{induction2gr}
\begin{split}
\sum_{i=s}^{n-1}|\widetilde{f}(a_i)-\widetilde{f}(a_j)|^p&= \sum_{i,j\in \{r+1,\dots,n\}}|f(a_i)-f(a_j)|^p+\sum_{i=r+1}^{n}\sum_{j=s}^{r-1}|f(a_i)-f(a_j)|^p+\sum_{i,j\in \{s,\dots,r-1\}}|f(a_i)-f(a_j)|^p\\
&\ge \sum_{i,j\in \{r+1,\dots,n\}}|f(a_i)-f(a_j)|^p+(r-s)\sum_{i=r+1}^{n}|f(a_i)-m|^p
\end{split}
\end{align}
where in the last inequality we used that $f(a_i)\ge m>f(a_j)$ for $i\ge r+1$ and $j\le r-1.$ Then, by combining \eqref{induction1gr}, \eqref{induction2gr} and using that $(s-1)\displaystyle\sum_{i=s}^{n-1}|\widetilde{f}(a_i)-\widetilde{m}|^p\ge (s-1)\displaystyle\sum_{i=r+1}^n|f(a_i)-m|^p$ (since $\widetilde{m}\le m$), we have 
\begin{align*}
 &\left(1-\frac{1}{n-1}\right)^p\left(\sum_{i,j\in \{1,..r-1,r+1,..,n\}}|f(a_i)-f(a_j)|^p\right)\\
 &\ge \sum_{i,j\in \{r+1,\dots,n\}}|f(a_i)-f(a_j)|^p+(r-s)\sum_{i=r+1}^{n}|f(a_i)-m|^p\\
 &\ \ \ \ +(s-1)\sum_{i=r+1}^{n}|f(a_i)-m|^p\\
 &\ge \sum_{i,j\in \{r+1,\dots,n\}}|f(a_i)-f(a_j)|^p+(r-1)\sum_{i=r+1}^{n}|f(a_i)-m|^p.
\end{align*}
Combining this with \eqref{grmaintool0} we conclude $$\sum_{i,j\in \{r,..,n\}}|f(a_i)-f(a_j)|^p+(r-1)\sum_{i=r}^n |f(a_i)-m|^p\le \left(1-\frac{1}{n}\right)^p\left(\sum_{i,j\in \{1..,n\}}|f(a_i)-f(a_j)|^p\right),$$
that is equivalent to \eqref{grupperbound} in this case. This concludes the proof of our theorem.
\end{proof}



\subsection{0ptimal bounds for the $p$-variation on $S_n$: Proof of Theorem \ref{theo 2}}

Now we deal with the problems related to the $p$-variation of the maximal operator in $S_n.$ 

\begin{proof}[Proof of Theorem \ref{theo 2} (i)]
We assume without loss of generality that $f$ is non-negative. We analyse three different cases. {\it{Case 1: 
$f(a_1)\geq\max\{f(a_2),f(a_3)\}$
}}.\\
In this case we have that $M_{S_3}f(a_1)=f(a_1)$, then

\begin{align*}
(\var_p M_{S_{3}}f)^p&\leq \left(f(a_1)-\frac{f(a_1)+f(a_2)}{2}\right)^p+\left(f(a_1)-\frac{f(a_1)+f(a_3)}{2}\right)^p\\
&\leq 
\frac{1}{2^{p}}(\var_p f)^p.
\end{align*}

{\it{Case 2: $f(a_1)\leq\min\{f(a_2),f(a_3)\}$
}}. We assume without loss of generality that 
$f(a_1)\leq f(a_3)\leq f(a_2)$. Then, by H\"older inequality we have that

\begin{align*}
(\var_p M_{S_{3}}f)^p&= \left(f(a_2)-\frac{f(a_1)+f(a_2)+f(a_3)}{3}\right)^p+\left(\left[f(a_3)-\frac{f(a_1)+f(a_2)+f(a_3)}{3}\right]_{+}\right)^p\\
&=\left(\frac{f(a_2)-f(a_1)+f(a_2)-f(a_3)}{3}\right)^p+\left(\left[\frac{f(a_3)-f(a_1)-(f(a_2)-f(a_3))}{3}\right]_{+}\right)^p\\
&=\left(\frac{2(f(a_2)-f(a_1))-(f(a_3)-f(a_1))}{3}\right)^p+\left(\left[\frac{2(f(a_3)-f(a_1))-(f(a_2)-f(a_1))}{3}\right]_{+}\right)^p\\
&\leq \left(\frac{2(f(a_2)-f(a_1))-(f(a_3)-f(a_1))}{3}+\left[\frac{2(f(a_3)-f(a_1))-(f(a_2)-f(a_1))}{3}\right]_{+}\right)^p \\
&\leq \frac{(1+2^{p'})^{p/p'}}{3^p}(\var_p f)^p.
\end{align*}
Where we have used the fact that $p>1$ in the fourth line and the final step follows by H\"older's inequality.\\

{\it{Case 3: $\min\{f(a_2),f(a_3)\}<f(a_1)<\max\{f(a_2),f(a_3)\}$}}. We assume without loss of generality that 
$f(a_3)< f(a_1)< f(a_2)$. Then, we have

\begin{align*}
(\var_p M_{S_{3}}f)^p&= \left(f(a_2)-M_{S_3}f(a_1)\right)^p+\left(M_{S_3}f(a_1)-\frac{f(a_1)+f(a_2)+f(a_3)}{3}\right)^p\\
&\leq \left(f(a_2)-\frac{f(a_1)+f(a_2)+f(a_3)}{3}\right)^p\\
&=\left(\frac{2(f(a_2)-f(a_1))+(f(a_1)-f(a_3))}{3}\right)^p\\
&\leq \frac{(1+2^{p'})^{p/p'}}{3^p}(\var_p f)^p.
\end{align*}
In the second line we used the fact that $p>1$. In the final inequality (in the last two cases) we have used that $c_1d_1+c_2d_2\leq (c^{p'}_1+c^{p'}_2)^{1/p'}(d^p_1+d^p_2)^{1/p}$ for all $c_1,c_2,d_1,d_2\geq0$ by H\"older's inequality.
This conclude the proof of
\begin{align*}
{\bf C}_{S_3,p}\le \frac{(1+2^{p/(p-1)})^{(p-1)/p}}{3}.   \end{align*}

in \eqref{star graph n=3}. Finally, we observe that 
\begin{align}\label{grupperboundn=3}
{\bf C}_{S_3,p}\ge \frac{(1+2^{p/(p-1)})^{(p-1)/p}}{3}.    
\end{align}
For that we consider the function $f:V\to\R$ defined by
$$
f(a_3)=2, f(a_1)=3 \ \text{and}\ f(a_2)=3+2^{\frac{1}{p-1}}.
$$
Then, $\var_pf=(1+2^{\frac{p}{p-1}})^{\frac{1}{p}}$. Moreover,
$$
M_{S_3}f(a_2)=f(a_2)=3+2^{\frac{1}{p-1}}\ \text{and}\ \ M_{S_3}(a_3)=M_{S_3}f(a_1)=\frac{2+3+3+2^{\frac{1}{p-1}}}{3}.
$$
Thus
$$
\var_{p} M_{S_3}f=M_{S_3}f(a_2)-M_{S_3}f(a_1)=\frac{1+2^{\frac{p}{p-1}}}{3}.
$$
Therefore
$$\frac{\var_{p}M_{S_3}f}{\var_{p}f}=\frac{(1+2^{p'})^{\frac{1}{p'}}}{3}.$$
So, we obtain \eqref{grupperboundn=3} and thus \eqref{star graph n=3}.
\end{proof}
The proof of the previous result provides an example where the value $$\sup_{f:V\to \mathbb{R}; \var_{p}f>0}\frac{\var_{p}M_{G}f}{\var_{p}f}$$ is not attained by any {\it Dirac delta.} This is a sign of the complexity of this problem when $p>1,$ since is not clear how the extremizers should behave for $n>3.$  

In the case $p=2$, an interesting example is the following: let $S_n=(V,E)$ as in the Theorem \ref{theo 2}, consider the function $f:V\to\R$ defined by
$$
f(a_1)=n, \ f(a_2)=n+(n-1),\ \  \text{and}\ \ f(a_i)=n-1 \ \ \text{for all}\ \ 3\leq i\leq n.
$$
In this case
$$
M_{S_n}f(a_2)=n+(n-1)\ \ \text{and}\ \    M_{S_n}f(a_i)=n+\frac{1}{n} \ \text{for all}\ \ i\neq 2.
$$
Then
$$
\frac{\var_{2}M_{S_n}f}{\var_2f}=\frac{n-1-\frac{1}{n}}{[(n-1)^2+(n-2)]^{1/2}}=\frac{[(n-1)^2+(n-2)]^{1/2}}{n}>\frac{n-1}{n}.
$$
This provides further evidence to the fact that in general the extremizers on $S_n$ are different when $p>1$ than when $p\le 1.$ \\
Now we deal with the next assertion of our theorem.  
Taking $f=\delta_{a_2}$ on the definition of ${\bf C}_{S_n,p}$ we have that
$${\bf C}_{S_n,p}\ge 1-\frac{1}{n}.$$
In the following we prove the inequality
\begin{align}\label{grupperboundestrella}
{\bf C}_{S_n,p}\le 1-\frac{1}{n},
\end{align}
from where both assertion follow. This inequality is equivalent to
\begin{align}\label{grinequalitystar}
\var_{p} M_{S_{n}}f\le (1-\frac{1}{n}) \var_{p}f,    
\end{align}
for all functions $f:V\to \mathbb{R}.$
\begin{proof}[Proof of Theorem \ref{theo 2} (ii)]
We assume without loss of generality that $f$ is non-negative. Let 
$$
m=\frac{1}{n}\sum_{i=1}^{n}f(a_i).
$$
Then
\begin{align*}
    &\var M_{S_n}f\\
    &=\sum_{i=2}^{n}|M_{S_n}f(a_i)-M_{S_n}f(a_1)|\\
    &=\sum_{M_{S_n}f(a_i)>M_{S_n}f(a_1)}M_{S_n}f(a_i)-M_{S_n}f(a_1)+\sum_{M_{S_n}f(a_1)>M_{S_n}f(a_i)}M_{S_n}f(a_1)-M_{S_n}f(a_i)\\
    &= \sum_{M_{S_n}f(a_i)>M_{S_n}f(a_1)}f(a_i)-M_{S_n}f(a_1)+\sum_{M_{S_n}f(a_1)>M_{S_n}f(a_i)}f(a_1)-M_{S_n}f(a_i)\\
    &\leq \sum_{M_{S_n}f(a_i)>M_{S_n}f(a_1)}f(a_i)-m+\sum_{M_{S_n}f(a_1)>M_{S_n}f(a_i)}f(a_1)-m\\
    &=\sum_{M_{S_n}f(a_i)>M_{S_n}f(a_1)}\left[\frac{n-1}{n}(f(a_i)-f(a_1))+\sum_{j\neq i}\frac{f(a_1)-f(a_j)}{n}\right]\\
    &\ \ \ \ \ \ \ \ \ \ +\sum_{M_{S_n}f(a_1)>M_{S_n}f(a_i)}\sum_{k=2}^{n}\frac{f(a_1)-f(a_k)}{n}\\
    &=\sum_{M_{S_n}f(a_i)>M_{S_n}f(a_1)}(f(a_i)-f(a_1))\left[\frac{n-1}{n}-\frac{(|\{i;M_{S_n}f(a_i)>M_{S_n}f(a_1)\}|-1)}{n}\right. \\
    &\ \ \ \ \ \ \ \ \ \ \ \ \ \ \ \ \ \ \ \ \ \ \ \ \ \ \ \ \ \ \ \ \ \ \ \ \ \ \ \ \ \ \ \ \ \ \ \ \ \ \ \ \ \ \ \ \ \  \left.-\frac{|\{i;M_{S_n}f(a_1)>M_{S_n}f(a_i)\}|}{n}\right]\\
    &\ \ \ \ \ \ \ \ \ \ +\sum_{M_{S_n}f(a_1)>M_{S_n}f(a_i)}(f(a_1)-f(a_k))\left[\frac{|\{i;M_{S_n}f(a_i)>M_{S_n}f(a_1)\}|}{n}\right. \\
    &\ \ \ \ \ \ \ \ \ \ \ \ \ \ \ \ \ \ \ \ \ \ \ \ \ \ \ \ \ \ \ \ \ \ \ \ \ \ \ \ \ \ \ \ \ \ \ \ \ \ \ \ \ \ \ \ \ \  \left.+\frac{|\{i;M_{S_n}f(a_1)>M_{S_n}f(a_i)\}|}{n}\right]\\\\
    &\leq \frac{n-1}{n}\var f, 
\end{align*}
from where \eqref{grupperboundestrella} follows and therefore our result. 
\end{proof}
\begin{proof}[Proof of Theorem \ref{theo 2} (iii)]
We write $f(a_2)\ge \dots \ge f(a_r)\ge m>f(a_{r+1})\ge \dots\ge f(a_{n}).$ We distinguish among two cases, the first being $f(a_1)\le m.$\\

{\it Case 1: $f(a_1)\le m.$} Let us keep in mind in the following that in this case
$$(\var_{p}M_{S_n}f)^p=\sum_{i=2}^{r}|f(a_i)-m|^p.$$

In this case is enough to prove inequality \eqref{grinequalitystar} when $f(a_i)<f(a_1)$ for $i>r$. In fact, if \eqref{grinequalitystar} fails for some $f$ with $f(a_i)>f(a_1)$ and $i>r$, it also fails for the function $\widetilde{f}$ defined by $\widetilde{f}(e)=f(e)$ for every $e\notin \{a_2,a_i\}$, $\widetilde{f}(a_i)=2f(a_1)-f(a_i)$ and $\widetilde{f}(a_2)=f(a_2)+f(a_i)-\widetilde{f}(a_i)$ (notice that $\widetilde{f}(a_i)<\widetilde{f}(a_1)$ by construction). This holds because $\widetilde{m}=\frac{\sum_{j=1}^n\widetilde{f}(a_j)}{n}=\frac{\sum_{i=j}^nf(a_j)}{n}=m$, by definition, and \begin{align}\label{grreduction}
(1-\frac{1}{n})^p(f(a_2)-f(a_1))^p-(f(a_2)-m)^p\ge (1-\frac{1}{n})^p(\widetilde{f}(a_2)-\widetilde{f}(a_1))^p-(\widetilde{f}(a_2)-\widetilde{m})^p,
\end{align}
noticing that\eqref{grreduction} is equivalent to $$\left(1-\frac{1}{n}\right)^p(\var_{p}f)^p-(\var_{p}M_{S_n}f)^p\ge \left(1-\frac{1}{n}\right)^p(\var_{p}\widetilde{f})^p-(\var_{p}M_{S_n}\widetilde{f})^p,$$
since the other terms in this inequality remain unchanged when we do the transformation $f\to \widetilde{f}$ (notice that, by construction, $|f(a_1)-f(a_i)|=|\widetilde{f}(a_1)-\widetilde{f}(a_i)|.$)
We have that \eqref{grreduction} holds because
$$|\widetilde{f}(a_2)-\widetilde{f}(a_1)|^p-|f(a_2)-f(a_1)|^p\le |\widetilde{f}(a_2)-\widetilde{m}|^p-|f(a_2)-m|^p,$$ inequality that follows because of $\widetilde{f}(a_1)=f(a_1), m=\widetilde{m}$, the concavity of the function $x\to x^p$ (and thus the function $x\to (x+c)^p-x^p$ is decreasing for $x,c>0$, here considering $c=\widetilde{f}(a_2)-f(a_2)$)  and the fact that $f(a_2)-f(a_1)\ge f(a_2)-m.$ By iterating the previous argument we get the desired reduction.

We write $f(a_i)-m=x_i$ for $i=2,...r;$ $m-f(a_1)=u$ and $y_i=f(a_1)-f(a_i)$ for $i=r+1,...,n.$ Observe that given our reduction we have $y_i\ge 0.$ We observe that since \begin{align*}m=\frac{\sum_{i=1}^n f(a_i)}{n}&=\frac{\sum_{i=2}^r(m+x_i)+f(a_1)+\sum_{i=r+1}^n(f(a_1)-y_i)}{n}\\
&=\frac{\sum_{i=2}^r(m+x_i)+m-u+\sum_{i=r+1}^{n}(m-u-y_i)}{n},
\end{align*}
we have
\begin{align*}
\sum_{i=2}^r x_i=u+\sum_{i=r+1}^{n}(u+y_i), 
\end{align*}
from where we obtain $u\le \frac{\sum_{i=2}^r x_i}{n-r+1}.$
Also, let us observe that \eqref{grinequalitystar} is equivalent in this case to 
\begin{align}\label{grstar1234}
\sum_{i=2}^{r}|x_i|^p\le \left(1-\frac{1}{n}\right)^p\left(\sum_{i=2}^r|x_i+u|^p+\sum_{i=r+1}^{n}|y_i|^p\right).
\end{align}
Observe that $\displaystyle\sum_{i=r+1}^{n}|y_i|^p\ge \left|\displaystyle\sum_{i=r+1}^{n}y_i\right|^p=\left|\displaystyle\sum_{i=2}^{r} x_i-(n-r+1)u\right|^p.$ Then,for $x_2,\dots, x_r,n,r$ and $p$ fixed, we define the function $$g(z):=\displaystyle\sum_{i=2}^r|x_i+z|^p+\left|\displaystyle\sum_{i=2}^{r} x_i-(n-r+1)z\right|^p,$$ we observe that for $z\in \left[0, \frac{\sum_{i=2}^r x_i}{n-r+1}\right]$ this function is concave (sum of concave functions), therefore $g(z)\ge \min \left\{g\left(\frac{\sum_{i=2}^r x_i}{n-r+1}\right),g(0)\right\}$ in that interval. Then, we have \begin{align*}
\left(1-\frac{1}{n}\right)^p\left(\sum_{i=2}^r|x_i+u|^p+\sum_{i=r+1}^{n}|y_i|^p\right)&\ge \left(1-\frac{1}{n}\right)^p\left(\sum_{i=2}^r|x_i+u|^p+|\displaystyle\sum_{i=2}^{r} x_i-(n-r+1)u|^p\right)\\
&=\left(1-\frac{1}{n}\right)^pg(u)\\
&\ge \left(1-\frac{1}{n}\right)^p\min \left\{g\left(\frac{\sum_{i=2}^r x_i}{n-r+1}\right),g(0)\right\}\\
&\ge \left(1-\frac{1}{n}\right)^p\min\left\{\left(\sum_{i=2}^{r}|x_i|^p+\left|\sum_{i=2}^{r}x_i\right|^p\right),\left(\sum_{i=2}^{r}\left|x_i+\frac{\sum_{i=2}^r x_i}{n-r+1}\right|^p\right)\right\}. 
\end{align*} 
Then, in order to prove \eqref{grstar1234} enough to prove that 
\begin{align}\label{grborder1}
\sum_{i=2}^{r}|x_i|^p\le \left(1-\frac{1}{n}\right)^p\left(\sum_{i=2}^{r}|x_i|^p+\left|\sum_{i=2}^{r}x_i\right|^p\right),
\end{align}
and
\begin{align}\label{grborder2}
\sum_{i=2}^{r}|x_i|^p\le \left(1-\frac{1}{n}\right)^p\left(\sum_{i=2}^{r}\left|x_i+\frac{\sum_{i=2}^r x_i}{n-r+1}\right|^p\right),
\end{align}
for \eqref{grborder1} we observe that $\left|\displaystyle\sum_{i=2}^r x_i\right|^p\ge \max_{i=2,..r}|x_i|^p\ge \frac{\sum_{i=2}^r|x_i|^p}{r-1}$, so 
$$\left(1-\frac{1}{n}\right)^p\left(\sum_{i=2}^{r}|x_i|^p+\left|\sum_{i=2}^{r}x_i\right|^p\right)\ge \left(\sum_{i=2}^r|x_i|^p\right)\left(1-\frac{1}{n}\right)^p\left(1+\frac{1}{r-1}\right)\ge  \left(\sum_{i=2}^r|x_i|^p\right),$$
where we use that for $r\le n-1$ we have $$\left(1-\frac{1}{n}\right)^p\left(1+\frac{1}{r-1}\right)\ge \left(1-\frac{1}{n}\right)\left(1+\frac{1}{r-1}\right)\ge \left(1-\frac{1}{n}\right)\left(1+\frac{1}{n-1}\right)=1.$$ From this we conclude this inequality. \

For \eqref{grborder2}, we notice that $x_i+\frac{\sum_{i=2}^{r}x_i}{n-r+1}\ge x_i\left(1+\frac{1}{n-r+1}\right).$ Then, since (for $n\ge r\ge 2,$) we have  $$\left(1-\frac{1}{n}\right)^p\left(1+\frac{1}{n-r+1}\right)^p\ge \left(1-\frac{1}{n}\right)\left(1+\frac{1}{n-r+1}\right)\ge \left(1-\frac{1}{n}\right)\left(1+\frac{1}{n-1}\right)=1$$  we conclude this inequality, and therefore this case. Notice that this argument holds for every $p\in (0,1).$\\

{\it Case 2: $f(a_1)>m.$} Here, we observe that if $f(a_2)\leq f(a_1)$ then $|M_{S_n}f(a_1)-M_{S_n}f(a_i)|\le \frac{|f(a_1)-f(a_i)|}{2}$ for all $i\ge 2$ and thus \eqref{grinequalitystar} follows in this case (since $\var_{p}M_{S_n}f\le \frac{1}{2}\var_{p}f$). So we can assume that $f(a_2)>f(a_1).$ Let us take $k$ such that $f(a_2)\ge f(a_3)\ge \dots f(a_k)\ge f(a_1)> f(a_{k+1}),$ and $s$ is the minimum such that $f(a_1)+f(a_{s})\ge 2m.$ Let us keep in mind that, in this case, we have
$$(\var_{p}M_{S_n}f)^p=\displaystyle\sum_{i=2}^{k}|f(a_i)-f(a_1)|^p+\displaystyle\sum_{j=k+1}^{s}\left|\frac{f(a_1)-f(a_i)}{2}\right|^p+\displaystyle \sum_{i=s+1}^{n}\left|f(a_1)-m\right|^p.$$

Let us write $u=f(a_1)-m,$ $f(a_i)-f(a_1)=x_i$ for $i=2,..k$ and $y_i=f(a_1)-f(a_i)$ for $i=k+1,\dots n.$ We observe that, since $$m=\frac{\sum_{i=1}^nf(a_i)}{n}=\frac{\sum_{i=2}^k(m+u+x_i)+m+u+\sum_{i=k+1}^{n}(m+u-y_i)}{n},$$ we have $\displaystyle\sum_{i=2}^k x_i+nu=\displaystyle \sum_{k+1}^{n}y_i.$ 
Then \eqref{grinequalitystar} is equivalent to
\begin{align}\label{grequivalentstar}
\sum_{i=2}^kx_i^p+\sum_{i=k+1}^{s}\left(\frac{y_i}{2}\right)^p+\sum_{s+1}^{n}u^p\le \left(1-\frac{1}{n}\right)^p\left(\sum_{i=2}^kx_i^p+\sum_{i=k+1}^{n}y_i^p\right).
\end{align}
It is useful to solve first the case $k=n-1$ (observe, that then $s=n-1$). In this case we observe that $y_{n}=\displaystyle\sum_{i=2}^{k}x_i+nu.$ Then, we need to prove \begin{align}\label{grk=n-1}
\displaystyle\sum_{i=2}^kx_i^p+u^p\le \left(1-\frac{1}{n}\right)^p\left[\displaystyle\sum_{i=2}^k x_i^p+\left(\displaystyle\sum_{i=2}^k x_i+nu\right)^p\right].
\end{align}We notice first that, by Jensen's inequality we have 
$$\frac{(n-2)n^p+1}{n-1}\le \left(\frac{(n-2)n+1}{n-1}\right)^p=(n-1)^p,$$
then $(n-2)^{1-p}(n^p-(n-1)^p)\le (n-2)(n^p-(n-1)^p)\le (n-1)^p-1,$
where this last inequality is just another way of writing the previous claim. Then, by Jensen's inequality (in the second inequality), we have
\begin{align*}
((n-1)^p-1)\left(\sum_{i=2}^{k}x_i+nu\right)^p&\ge((n-1)^p-1)\left(\sum_{i=2}^{k}x_i\right)^p\\
&\ge ((n-1)^p-1)(n-2)^{p-1}\sum_{i=2}^{k}x_i^p\\
&\ge (n^p-(n-1)^p)\sum_{i=2}^{k}x_i^p, 
\end{align*}
where in the last inequality we use what we obtained before. 
 Then, $$(n-1)^p\left[\left(\sum_{i=2}^{k}x_i+nu\right)^p+\sum_{i=2}^{k}x_i^p\right]\ge n^p\sum_{i=2}^{k}x_i^p+\left(\sum_{i=2}^{k}x_i+nu\right)^p,$$
thus, we have 
\begin{align*}
 \sum_{i=2}^k(nx_i)^p+(nu)^p&\le \sum_{i=2}^k(nx_i)^p+\left(\sum_{i=2}^k x_i+nu\right)^p\\
 &\le (n-1)^p\left[\sum_{i=2}^kx_i^p+\left(\sum_{i=2}^k x_i+nu\right)^p\right],
\end{align*}
concluding the inequality \eqref{grk=n-1}.
 So, we assume in the following that $k\le n-2.$

We observe that $u\le \frac{y_i}{2}$ for $i=s+1,..n,$ and thus 
$$
\sum_{i=2}^kx_i^p+\sum_{i=k+1}^{s}\left(\frac{y_i}{2}\right)^p+\sum_{s+1}^{n}u^p\le \sum_{i=2}^kx_i^p+\sum_{i=k+1}^{n}\left(\frac{y_i}{2}\right)^p,$$
therefore \eqref{grequivalentstar} would follow if $$\sum_{i=2}^k x_i^p\left(1-\left(1-\frac{1}{n}\right)^p\right)\le \left(\left(1-\frac{1}{n}\right)^p-\frac{1}{2^p}\right)\left(\sum_{i=k+1}^{n} y_i^p\right).$$
 Indeed, by Jensen's inequality $\displaystyle\sum_{i=k+1}^{n} y_i^p\ge \left(\sum_{i=k+1}^{n}y_i\right)^p\ge \left(\sum_{i=2}^k x_i\right)^p\ge (k-1)^{p-1}\left(\sum_{i=2}^kx_i^p\right).$ So, we need $(k-1)^{1-p}\left[1-\left(1-\frac{1}{n}\right)^p\right]\le \left(1-\frac{1}{n}\right)^p-\frac{1}{2^p}.$
Since $k-1\le n-3$  is enough \begin{align}\label{grgoal}
(n-3)^{1-p}\left(1-\left(1-\frac{1}{n}\right)^p\right)\le \left(1-\frac{1}{n}\right)^p-\frac{1}{2^p},
\end{align}
but that is equivalent to $$(n-3)^{1-p}(n^p-(n-1)^p)\le (n-1)^p-\left(\frac{n}{2}\right)^p,$$ then, is enough to prove (we use here $n^p-(n-1)^p\le p(n-1)^{p-1}$ by the fundamental theorem of calculus)
\begin{align}\label{gr00}(n-3)^{1-p}p(n-1)^{p-1}\le (n-1)^p-\left(\frac{n}{2}\right)^p,\end{align}
or, the stronger bound (since $\left(\frac{n-3}{n-1}\right)^{1-p}\le 1$), $p\le (n-1)^p-(\frac{n}{2})^p.$ Fixed $p$, is possible to observe that this last inequality holds for $n$ big enough. Therefore, we conclude the last statement of Theorem \ref{theo 2} (iii). 
Now we assume that $1>p\ge \frac{1}{2}.$
First observe that for $n\ge 6$ we have that $p\le (n-1)^p-\left(\frac{n}{2}\right)^p,$ in fact $g(n)=(n-1)^p-\left(\frac{n}{2}\right)^p$ is increasing for $n\ge 2$ because its derivative is $p(n-1)^{p-1}-\frac{p}{2}\left(\frac{n}{2}\right)^{p-1}\ge 0$ since $2\ge 2^{1-p}\ge \left(\frac{2(n-1)}{n}\right)^{1-p}$ So, we need to prove $p\le 5^p-3^p$, indeed $g(p)=5^p-3^p-p$ is convex for $p\ge 0$ (its second derivative is $\log(5)^{2}5^p-\log(3)^{2}3^p\ge 0$) then since $g(0)=0$ and  $g\left(\frac{1}{2}\right)=\sqrt{5}-\sqrt{3}-\frac{1}{2}\ge 0$  for every $p\ge \frac{1}{2}$ we get $\alpha g(p)=\alpha g(p)+\beta g(0)\ge g\left(\frac{1}{2}\right)>0,$ for some $\alpha,\beta\ge 0.$ From where we conclude this inequality.   
Then, considering \cite[Theorem 1.4]{LX}, the only cases left are $n=4$ and $n=5.$ For $n=4$, considering \eqref{gr00}, we just need
\begin{align*}
\left(\frac{1}{3}\right)^{1-p}p\le 3^p-2^p,   
\end{align*}
or, equivalently, $p\le 3-3(\frac{2}{3})^p$, but $g(p)=3-3(\frac{2}{3})^p-p$ is concave in $(0,1),$ so, since $g(0)=0=g(1),$ we conclude in this case. Notice that this argument holds for every $1>p>0,$ and therefore the case $n=4$ is completed. 

Finally, for $n=5,$ we just need (considering \eqref{gr00}) $$\left(\frac{1}{2}\right)^{1-p}p\le 4^p-\left(\frac{5}{2}\right)^p,$$ or equivalently $$\frac{p}{2}\le 2^p-\left(\frac{5}{4}\right)^p,$$ but $g(p)=2^p-\left(\frac{5}{4}\right)^p-\frac{p}{2}$ is convex for $p\ge 0$ (because its second derivative is $\log(2)^{2}2^p-\log\left(\frac{5}{4}\right)^2\left(\frac{5}{4}\right)^p\ge 0$) then since $\sqrt{2}-\sqrt{\frac{5}{4}}-\frac{1}{4}\ge 0$ and $g(0)=0$ we conclude this case similarly as for $n\ge6$. Since we finish the analysis of cases, we conclude the proof of the theorem. 
\end{proof}
\begin{remark}
It is possible, in fact, to prove \eqref{grgoal} for every $0<p<1$ when $n=5,$ thus proving Theorem \ref{theo 2}(iii) for every $0<p<1$ in this case. We omit the details for the sake of simplicity.
\end{remark}\


\subsection{Qualitative results: Proof of Theorem \ref{theo 3}}
In the last part of this section we prove our versions of the qualitative results conjectured in  Conjecture C.

\begin{proof}[Proof of Theorem \ref{theo 3} (i)]
We assume without loss of generality that $f$ is non-negative. Also, in the following we assume that $G_n$ is connected, since the general case follows from there. 
Given $u,v\in G_n:=\{a_1,a_2,\dots,a_n\}$, such that $M_{\alpha,G_n}f(u)>M_{\alpha,G_n}f(v)$, we observe that there exists $k\leq n-1$ such that
$$
M_{\alpha,G_n}f(u)=\frac{|B(u,k)|^{\alpha}}{|B(u,k)|}\sum_{a_i\in B(u,k)}f(a_i),
$$
then
\begin{align*}
    M_{\alpha,G_n}f(u)-M_{\alpha,G_n}f(v)&\leq\frac{|B(u,k)|^{\alpha}}{|B(u,k)|}\sum_{a_i\in B(u,k)}f(a_i)-\frac{n^\alpha}{n}\sum_{i=1}^{n}f(a_i)\\
    &\leq n^{\alpha}\left[\frac{1}{|B(u,k)|}\sum_{a_i\in B(u,k)}f(a_i)-\frac{1}{n}\sum_{i=1}^{n}f(a_i)\right]\\
    &\leq n^{\alpha}(f(x)-f(y))\\
    &\leq n^{\alpha}(n-1)^{\max\{1-\frac{1}{p},0\}}\var_{p}f 
\end{align*}
Where, in the third line $x\in G_n$ is choose such that $f(x):=\max\{f(a_i);a_i\in B(u,k)\}$ and $y\in G_n$ is choose such that $f(y):=\min\{f(a_i);a_i\in G_n\}$. In the fourth line we used H\"older inequality.

Therefore 
\begin{align*}
\var_{q}M_{\alpha,G_n}&=\left(\frac{1}{2}\sum_{u\in G_n}\sum_{v\in N_{G_n}(u)}|M_{\alpha,G_n}f(u)-M_{\alpha,G_n}f(v)|^q\right)^{1/q}\\
&\leq \left(\frac{n(n-1)}{2}\right)^{1/q}n^{\alpha}(n-1)^{\max\{\frac{p-1}{p},0\}}\var_pf\\
&=C(n,p,q)\var_pf.
\end{align*}
\end{proof}

\begin{proof}[Proof of Theorem \ref{theo 3} (ii)]
We start observing that for all $j\geq 1$
\begin{align}\label{key obs in theo 3 (ii)}
    \|f-f_j\|_{l^\infty(G_n)}&=\max_{y\in V}|f(y)-f_j(y)|-\min_{x\in V}|f(x)-f_j(x)| +\min_{x\in V}|f(x)-f_j(x)|\nonumber\\
    &\leq \var(f-f_j)+\min_{x\in V}|f(x)-f_j(x)|\nonumber\\
    &\leq n^{\max\{1-1/p,0\}}\var_p(f-f_j)+\min_{x\in V}|f(x)-f_j(x)|.
\end{align}
Then, assuming that  $\lim_{j\to\infty}\min_{x\in V}|f(x)-f_j(x)|=0$, we have that
\begin{equation*}
    \|f-f_j\|_{l^\infty(G_n)}\to 0 \ \ \text{as} \ \ j\to\infty.
\end{equation*}
Moreover, for any $u,v\in G_n$ we have that
\begin{align*}
    M_{\alpha,G_n}f(u)-M_{\alpha,G_n}f_j(u)-[M_{\alpha,G_n}f(v)-M_{\alpha,G_n}f_j(v)]&\leq M_{\alpha,G_n}(f-f_j)(u)+M_{\alpha,G_n}(f-f_j)(v)\\
    &\leq 2\|f-f_j\|_{l^1(G_n)}\\
    &\leq 2n\|f-f_j\|_{l^{\infty}(G_n)}\to 0 \ \ \text{as}\ \ j\to\infty.
\end{align*}
Therefore
\begin{align*}
    \var_q(M_{\alpha,G_n}f-M_{\alpha,G_n}f_j)\leq \left(\frac{n(n-1)}{2}\right)^{1/q}2n\|f-f_j\|_{l^{\infty}(G_n)}\to 0 \ \ \text{as}\ \ j\to\infty.
\end{align*}

Finally, we observe that without the assumption that  $\lim_{j\to\infty}\min_{x\in V}|f(x)-f_j(x)|=0$ the continuity property could fail, with this purpose in mind consider the following situation: Let $G_n=S_n$ the star graph with $n$ vertices $V=\{a_1,a_2,\dots,a_n\}$ and center at $a_1$, for simplicity we take $\alpha=0$ and $p=q=1$. We define the function $f$ by $f(a_1)=2$ and $f(a_i)=1$ for all $i\neq 1$ thus $M_{S_n}f(a_1)=2$ and $M_{S_n}f(a_i)=3/2$ for all $i\neq1$. Then, we consider the sequence of functions $(f_{j})_{j\in\N}$ defined by $f_j(a_i)=f(a_i)-3$ for all $a_i\in V$ and for all $j\in \N$. Then $\var(f-f_j)=0$ for all $j\in\N$, moreover $M_{S_n}f_j(a_1)=\frac{1+2(n-1)}{n}$ and $M_{S_n}f_j(a_i)=2$ for all $i\neq 1$. Therefore 
\begin{align*}
\var(M_{S_n}f-M_{S_n}f_j)&\geq M_{S_n}f(a_1)-M_{S_n}f_j(a_1)-[M_{S_n}f(a_2)-M_{S_n}f_j(a_2)]\\
&=2-\frac{1+2(n-1)}{n}-[3/2-2]\\
&=\frac{1}{n}+\frac{1}{2}\ \ \text{for all}\ \ j\in\N.
\end{align*}
Then $\var(M_{S_n}f-M_{S_n}f_j)\nrightarrow 0$ as $j\to \infty$.

\end{proof}\

\begin{proof}[Proof of Theorem \ref{theo 3} (iii)] 
The boundedness follows using part (i) and the following inequality which is true for some $k\leq n-1$
\begin{align*}
    M_{\alpha,G_n}f(a_0)&=\frac{1}{|B(a_0,k)|^{1-\alpha}}\sum_{m\in B(a_0,k)}|f(m)|\\
    &=\frac{1}{|B(a_0,k)|^{1-\alpha}}\sum_{m\in B(a_0,k)}(|f(m)|-|f(a_0)|)+|B(a_0,k)|^\alpha|f(a_0)|\\
    &\leq |B(a_0,k)|^{\alpha}(\max_{m\in B(a_0,k)}|f(m)-f(a_0)|+|f(a_0)|)\\
    &\leq |B(a_0,k)|^\alpha(\var f+|f(a_0)|)\\
    &\leq |B(a_0,k)|^{\alpha}n^{\max\{1-1/p,0\}}(\var_pf+|f(a_0)|)\\
    &\leq n^{\alpha+\max\{1-1/p,0\}}\|f\|_{BV_p(G_n)}.
\end{align*}

The continuity follows using part (ii) and the following observations
$$
0\leq \var_p(f-f_j)+\min_{x\in V}|f(x)-f_j(x)| \leq \var_p(f-f_j)+|(f-f_j)(a_0)|=\|f-f_j\|_{BV_p(G_n)},
$$
and
\begin{align}
|M_{\alpha,G_n}f(a_0)-M_{\alpha,G_n}f_j(a_0)|&\leq M_{\alpha,G_n}(f-f_j)(a_0)\nonumber\\
&\leq \|f-f_j\|_{l^1(G_n)}\nonumber\\
&\leq n\|f-f_j\|_{l^\infty(G_n)}\nonumber\\
&\leq n^{1+\max\{1-1/p,0\}}\var_p(f-f_j)+n\min_{x\in V}|f(x)-f_j(x)|\nonumber\\
&\leq n^{1+\max\{1-1/p,0\}}\|f-f_j\|_{BV_p(G_n)},\nonumber
\end{align}
which is a consequence of \eqref{key obs in theo 3 (ii)}.
\end{proof}

\section{Proof of optimal bounds for the $2$-norm of maximal functions}
In this subsection we prove our results concerning the values $\|M_{G}\|_2$ for our graphs of interest. 
\subsection{2-norm of the maximal operator in $K_n$: Proof of Theorem \ref{p=2, complete graph} and Corollary \ref{p=2, n=3m}}
We start by proving that Corollary \ref{p=2, n=3m} follows by Theorem \ref{p=2, complete graph}.
\begin{proof}[Proof of Corollary \ref{p=2, n=3m}]
The inequality 
$$
\|M_{K_n}f\|_2\leq \left(\frac{4}{3}\right)^{1/2}\|f\|_2
$$
follows from the Theorem \ref{p=2, complete graph}, since $k=n/3$ in the right hand side. On the other hand, we consider the following example: we define $f:V\to\R$ by
$$
f(a_i)=4\ \text{for all}\ 1\leq i\leq \frac{n}{3} \ \text{and} \ f(a_i)=1\ \text{for all}\ \frac{n}{3}+1\leq i\leq n.
$$
Then, in this case we have
$$
M_{K_n}f(a_i)=4\ \text{for all} \ 1\leq i\leq \frac{n}{3} \ \text{and}\ M_{K_n}f(a_i)=2 \ \text{for all}\ \frac{n}{3}+1\leq i\leq n. 
$$
Therefore
\begin{equation*}
    \|M_{K_n}f\|_2=\left(\frac{\frac{16n}{3}+\frac{4(2n)}{3}}{\frac{16n}{3}+\frac{2n}{3}}\right)^{1/2}\|f\|_2=\left(\frac{4}{3}\right)^{1/2}\|f\|_2.
\end{equation*}
\end{proof}



Now we prove our bound that holds for $K_n$ for every $n\ge 2.$

\begin{proof}[Proof of Theorem \ref{p=2, complete graph}]
We assume without loss of generality that $f$ is no-negative. Consider the case
$$
f(a_1)\geq f(a_2)\geq\dots\geq f(a_k)\geq m\geq f(a_{k+1})\geq\dots\geq f(a_n).
$$
Then, in this case
$$
M_{K_n}f(a_i)=f(a_i) \ \text{for all}\ 1\leq i\leq k,\ \text{and}\ M_{K_n}f(a_i)=m \ \text{for all} \ k+1\leq i\leq n.
$$
Therefore 
\begin{align}\label{ineq:1 p=2 complete graph}
\|M_{K_n}f\|^{2}_2&=\sum_{i=1}^{k}f(a_i)^{2}+(n-k)m^2\nonumber\\&=\left(1+\frac{n-k}{n^2}\right)\sum_{i=1}^{k}f(a_i)^{2}+\frac{n-k}{n^2}\sum_{i=k+1}^{n}f(a_i)^2\nonumber\\
&\ \ \ +\frac{2(n-k)}{n^2}\sum_{\substack{1\leq i<j\leq k}}f(a_i)f(a_j) +\frac{2(n-k)}{n^2}\sum_{\substack{k+1\leq i<j\leq n}}f(a_i)f(a_j)\nonumber\\
&\ \ \ +\frac{2(n-k)}{n^2}\sum_{\substack{1\leq i\leq k\\ k+1\leq j\leq n}}f(a_i)f(a_j)\nonumber\\
&\leq \left(1+\frac{n-k}{n^2}\right)\sum_{i=1}^{k}f(a_i)^{2}+\frac{n-k}{n^2}\sum_{i=k+1}^{n}f(a_i)^2\\
&\ \ \ +\frac{(n-k)(k-1)}{n^2}\sum_{i=1}^{k}f(a_i)^2+\frac{(n-k)(n-k-1)}{n^2}\sum_{i=k+1}^{n}f(a_i)^2\nonumber\\
&\ \ \ +\frac{2(n-k)}{n^2}\sum_{\substack{1\leq i\leq k\\ k+1\leq j\leq n}}f(a_i)f(a_j)\nonumber\\
&=A_k\sum_{i=1}^{k}f(a_i)^{2}+B_K\sum_{i=k+1}^{n}f(a_i)^2+\frac{2(n-k)}{n^2}\sum_{\substack{1\leq i\leq k\nonumber\\ k+1\leq j\leq n}}f(a_i)f(a_j),
\end{align}
where $A_k:=1+\frac{(n-k)k}{n^2}$ and $B_k:=\frac{(n-k)^2}{n^2}$. Observe that $A_k-B_k=\frac{3nk-2k^2}{n^2}$ and by the AM-GM inequality
\begin{align}\label{ineq:2 p=2 complete graph}
    \|M_{K_n}f\|^{2}_2&\leq A_k\sum_{i=1}^{k}f(a_i)^{2}+B_k\sum_{i=k+1}^{n}f(a_i)^2+\frac{2(n-k)}{n^2}\sum_{\substack{1\leq i\leq k\\ k+1\leq j\leq n}}f(a_i)f(a_j)\nonumber\\
    &\leq A_k\sum_{i=1}^{k}f(a_i)^{2}+B_k\sum_{i=k+1}^{n}f(a_i)^2+\frac{1}{n^2}\sum_{\substack{1\leq i\leq k\\ k+1\leq j\leq n}}(xf(a_i)^2+yf(a_j)^2)\\
    &=\left(A_k+\frac{(n-k)x}{n^2}\right)\sum_{i=1}^{k}f(a_i)^{2}+\left(B_k+\frac{ky}{n^2}\right)\sum_{i=k+1}^{n}f(a_i)^2\nonumber
\end{align}
for all $0<x,y$ such that $xy=(n-k)^2$. Then, we choose $x,y$ such that 
$$
A_k+\frac{(n-k)x}{n^2}=B_k+\frac{ky}{n^2}.
$$
So, $x$ is the positive solution for the equation 
$$
(3nk-2k^2)x+(n-k)x^2=k(n-k)^2.
$$
More precisely
$$
x:=\frac{-(3nk-2k^2)+(4kn^3-3n^2k^2)^{1/2}}{2(n-k)}.
$$
Therefore, combining \eqref{ineq:1 p=2 complete graph} and \eqref{ineq:2 p=2 complete graph} we obtain
\begin{align*}
   \|M_{K_n}f\|^{2}_2&\leq\max_{k\in[1,n-1]}\left(A_k+\frac{(n-k)x}{n^2}\right)\sum_{i=1}^{n}f(a_i)^2\\
   &=\max_{k\in[1,n-1]}\left(1+\frac{(n-k)k}{n^2}+\frac{(4kn^3-3n^2k^2)^{1/2}-(3nk-2k^2)}{2n^2}\right)\sum_{i=1}^{n}f(a_i)^2\\
   &=\max_{k\in[1,n-1]}\left(1-\frac{k}{2n}+\frac{(4kn-3k^2)^{1/2}}{2n}\right)\sum_{i=1}^{n}f(a_i)^2.
\end{align*}
Then, we consider the function 
$g:[1,n-1]\to\R$ defined by $g(t):=-t+(4tn-3t^2)^{1/2}$.

Observe that 
$$
\max_{t\in[1,n-1]}g(t)=g\left(\frac{n}{3}\right).
$$
Moreover, $g$ is increasing in $[1,n/3]$ and decreasing in $[n/3,n-1]$. Therefore 
\begin{equation}\label{ineq:3 p=2 complete graph}
   \|M_{K_n}f\|^{2}_2\leq \max_{k\in\{\lfloor\frac{n}{3}\rfloor,\lceil\frac{n}{3}\rceil\}}\left(1-\frac{k}{2n}+\frac{(4kn-3k^2)^{1/2}}{2n}\right)\|f\|^2_2.
\end{equation}

Finally, observe that in order to have an equality in \eqref{ineq:3 p=2 complete graph} it is enough to have equality in \eqref{ineq:1 p=2 complete graph} and \eqref{ineq:2 p=2 complete graph}. Moreover, the equality in \eqref{ineq:1 p=2 complete graph} is attained if and only if $f(a_i)=f(a_1)=\gamma$ for all $1\leq i\leq k$, and $f(a_j)=f(a_{k+1})=\eta$ for all $k+1\leq j\leq n$, for some $0<\eta<\gamma$. We can assume without loss of generality that $\eta=1$.
On the other hand, the equality in \eqref{ineq:2 p=2 complete graph} is attained if and only if $y^{1/2}=x^{1/2}\gamma=(n-k)^{1/2}\gamma^{1/2}$, or equivalently $\gamma=\frac{n-k}{x}$. Therefore, in order to obtain an equality in \eqref{ineq:3 p=2 complete graph} for $k \in\{\lfloor\frac{n}{3}\rfloor,\lceil\frac{n}{3}\rceil\}$ we consider the function $g_{k}:V\to\R$ defined by
$$
g_{k}(a_i)=\gamma:=\frac{2(n-k)^2}{(4kn^3-3n^2k^2)^{1/2}-(3nk-2k^2)} \ \ \ \text{for all} \ \ 1\leq i \leq k,
$$
and $g_k(a_j)=1$ for all $k+1\leq j\leq n$. Then, by construction
$$
\|M_{K_n}\|_2=\max_{k\in\{\lfloor \frac{n}{3}\rfloor,\lceil \frac{n}{3}\rceil \}}\frac{\|M_{K_n}g_{k}\|_2}{\|g_k\|_2}.
$$
this shows that our bound is optimal, moreover we have found extremizers. Observe that, in the particular case when $n=3k$, we obtain $\gamma=4$ as in the Corollary \ref{p=2, n=3m}.

\end{proof}\



\subsection{2-norm of the maximal operator in $S_n$: Proof of Theorem \ref{p=2, star graph}.}
Now we prove our result concerning the $2$-norm of our maximal operator on $S_n.$

\begin{proof}[Proof of Theorem \ref{p=2, star graph}]
As usual we assume without loss of generality that $f$ is no negative and we denote by $m$ the average of $f$ along $V$ {\it{i.e.}} $m=\frac{\sum_{i=1}^{n}f(a_i)}{n}$. We observe that $M_{S_n}f(a_1)=f(a_1)$ or $M_{S_n}f(a_1)=m.$ We study this two cases separately.\\

{\it{Case 1: $M_{S_n}f(a_1)=f(a_1).$}}
Assume without loss of generality that $M_{S_n}f(a_i)=f(a_i)$ for all $1\leq i\leq k$, $M_{S_n}f(a_i)=\frac{f(a_i)+f(a_1)}{2}$ for all $k+1\leq i\leq k+r$, and $M_{S_n}f(a_i)=m$ for all $k+r+1\leq i\leq n$. By Cauchy-Schwarz inequality we have
$$
m^2\leq \frac{\sum_{i=1}^{n}f(a_i)^2}{n}.
$$
Then
\begin{align*}
    \|M_{S_n}f\|^2_2&\le \left(1+\frac{r}{4}\right)f(a_1)^2+\sum_{i=2}^{k}f(a_i)^2+\frac{1}{4}\sum_{i=k+1}^{k+r}f(a_i)^2+\frac{2}{4}\sum_{i=k+1}^{k+r}f(a_i)f(a_1) +\frac{s}{n}\sum_{i=1}^{n}f(a_i)^2\\
    &=\left(1+\frac{r}{4}+\frac{s}{n}\right)f(a_1)^2+\left(1+\frac{s}{n}\right)\sum_{i=2}^{k}f(a_i)^2+\left(\frac{1}{4}+\frac{s}{n}\right)\sum_{i=k+1}^{k+r}f(a_i)^2\\
    &\ \ \ +\frac{2}{4}\sum_{i=k+1}^{k+r}f(a_i)f(a_1) +\frac{s}{n}\sum_{i=k+r+1}^{n}f(a_i)^2.
\end{align*}
where $s:=n-k-r$. Moreover, for all $k+1\leq i\leq k+r$, we have that 
\begin{equation*}
    \frac{2}{4}f(a_i)f(a_1)\leq xf(a_1)^2+yf(a_i)^2
\end{equation*}
for all $x,y>0$ such that $xy\geq\frac{1}{16}$. We can choose $x$ and $y$ such that 
$$
y-rx=1+\frac{r-1}{4} \ \text{and} \ \ xy=\frac{1}{16}.
$$
or equivalently 
$$
x:=\frac{[(r+9)(r+1)]^{1/2}-(r+3)}{8r}.
$$
Therefore, for all $n\geq 4$ we have
\begin{align*}
    \|M_{S_n}f\|^2_2&\le \max_{\{k,r\in\N;1\leq k+r\leq n\}}\left(1+\frac{n-k-r}{n}+\frac{r}{4}+\frac{[(r+9)(r+1)]^{1/2}-(r+3)}{8}\right)\|f\|^2_2\\
    &\le \left(1+\frac{n-1}{4}+\frac{(n^2+8n)^{1/2}-(n+2)}{8}\right)\|f\|^2_2.
\end{align*}

{\it{Case 2: \ $M_{S_n}f(a_1)=m.$}} In this case $k\geq 2$. 
Following the same strategy (and notation), for all $n\geq 4$ we obtain that
\begin{align*}
    \|M_{S_n}f\|^2_2
    &\le \left(\frac{r}{4}+\frac{s+1}{n}\right)f(a_1)^2+\left(1+\frac{s+1}{n}\right)\sum_{i=2}^{k}f(a_i)^2+\left(\frac{1}{4}+\frac{s+1}{n}\right)\sum_{i=k+1}^{k+r}f(a_i)^2\\
    &\ \ \ +\frac{2}{4}\sum_{i=k+1}^{k+r}f(a_i)f(a_1) +\frac{s+1}{n}\sum_{i=k+r+1}^{n}f(a_i)^2.\\
    &\leq \max_{\{k,r\in\N;1\leq k+r\leq n\}}\left\{\frac{n-k-r+1}{n}+\frac{r+1}{4},\frac{n-k-r+1}{n}+1\right\}\|f\|^2_2\\
    &=\max_{\{k,r\in\N;1\leq k+r\leq n\}}\left\{\frac{n-k-r+1}{n}+\frac{r+1}{4},\frac{n-1}{n}+1\right\}\|f\|^2_2.
\end{align*}
The inequality 
$$\|M_{S_n}\|_2\le \left(1+\frac{n-1}{4}+\frac{(n^2+8n)^{1/2}-(n+2)}{8}\right)^{1/2}:=C_n$$ 

follows from these two estimates.\\ 

Finally, we observe that $\|M_{S_n}\|_2=C_n$. Consider the function $g:V\to\R$ defined by
$g(a_i)=1$ for all $1\leq i\leq n-1$ and $g(a_0)=\gamma$, where we choose
$ \gamma$ to be a positive real number larger than 1, such that $\gamma$ is a solution for the quadratic equation
$$
aX^2+bX+c:=\left( C^2_{n}-1-\frac{(n-1)}{4}\right)x^2-\frac{n-1}{2}x+C^2_{n}(n-1)-\frac{n-1}{4}=0.
$$
The existence of $\gamma$ follows from the definition of $C_n$, since we can see that $b^2-4ac=0$ and $\frac{-b}{2a}>1$. More precisely
$$
\gamma=-\frac{b}{2a}=\frac{2(n-1)}{(n^2+8n)^{1/2}-(n+2)}.
$$
For this particular function we have
$$
\frac{\|M_{S_n}g\|_2}{\|g\|_2}=\left(\frac{\gamma^2+(n-1)\left(\frac{\gamma+1}{2}\right)^2}{\gamma^2+(n-1)}\right)^{1/2}= C_n.
$$
This concludes the proof of our theorem.
\end{proof}
\section{Acknowledgements.}
The authors are thankful to Emanuel Carneiro, Terence Tao and the anonymous referees for very helpful comments. C.G.R was supported by CAPES-Brazil.




\bibliographystyle{amsplain}

\end{document}